\documentclass{article}
\usepackage{amscd}
\usepackage{amsmath}
\usepackage{amssymb}
\usepackage{amsthm}
\usepackage{ascmac}
\usepackage{bm}
\usepackage{graphicx}
\usepackage{here}
\usepackage{hyphenat}
\usepackage{mathtools}\numberwithin{equation}{section}
\usepackage{minitoc}
\usepackage{multicol}
\usepackage{multirow}
\usepackage{setspace}
\theoremstyle{definition}
\newtheorem{df}{Definition}
\newtheorem{note}{Notation}
\newtheorem{q}{Problem}
\newtheorem{eg}{Example}
\newtheorem{rmk}{Remark}
\theoremstyle{theorem}
\newtheorem{prop}{Proposition}
\newtheorem{thm}{Theorem}

\DeclareMathOperator{\codeg}{codeg}
\DeclareMathOperator{\ehr}{ehr}

\DeclareMathOperator{\rl}{rl}
\DeclareMathOperator{\sd}{sd}

\DeclareMathOperator{\vol}{vol}

\title{Relative Ehrhart Functions: Eventual Polynomiality, Reciprocity Law, and Shifted Duality}
\author{Takashi Hirotsu}
\date{\today}
\begin{document}
\maketitle
\begin{abstract}
Classical Ehrhart theory measures the discrete capacity of a convex rational (or integral) polytope $P$ by counting the number of lattice points in the $t$-th dilate $tP$ of $P$. 
In this paper, we extend this paradigm by replacing a lattice point with a geometric object $Q$ of dimension at most $\dim P$. 
We show that the counting function $\mathrm{ehr}(P,Q;t)$ of such valid translations of $Q$ into $tP$ inherits eventual quasi-polynomiality (or eventual polynomiality) with leading term $\mathrm{vol}(P)t^d$, where $d = \dim P$. 
This result is naturally derived by induction on the dimension, based on the classical quasi-polynomiality (or polynomiality) of Ehrhart functions. 
Similarly, replacing $P$ with $P^\circ$ defines $\mathrm{ehr}^\circ(P,Q;t)$, which is also shown to be an eventual quasi-polynomial (or eventual polynomial). 
We prove that if $\dim P = d$, $\dim Q > 0$, and $Q$ is inscribed in $P$, then the reciprocity law $\mathrm{ehr}(P,Q;-t) = (-1)^{d}\mathrm{ehr}^\circ (P,Q;t+2)$ $(t \gg 0)$ holds via the Ehrhart--Macdonald reciprocity law for classical Ehrhart quasi-polynomials. 
We also provide an example of the pair of $P$ and $Q$ such that $\mathrm{ehr}(P,Q;-t) = (-1)^{d}\mathrm{ehr}^\circ (P,Q;t+1)$ holds. 
Furthermore, we examine the shifted duality that occasionally holds between $\mathrm{ehr}(P,Q;t)$ and $\mathrm{ehr}^\circ (P,Q;t)$, namely, the relation $\mathrm{ehr}(P,Q;t) = \mathrm{ehr}^\circ (P,Q;t+\sigma )$ $(t \gg 0)$ for some integer $\sigma > 0$. 
We provide a sufficient condition for the shifted duality to hold.
\end{abstract}
\section{Introduction}
\subsection{Motivation}
Before formally defining the relative Ehrhart function, we highlight three main motivations underlying this study.
First, the summatory Ehrhart polynomial of a convex integral polytope $P$ introduced in our previous work measures the number of homothetic copies of $P$ inside $tP$, which can be naturally viewed as the sum of the numbers of lattice translations of $sP$ in $tP$ for $s \in \{ 0,1,\dots,t\}$ (see \cite[Definition~1, Theorem~2]{Hir26b}). 
It is therefore a natural next step to consider the problem of counting the valid lattice translations of a convex integral polytope $Q$ of dimension at most $\dim P$ inside $tP$. 
Second, this formulation serves as a higher-dimensional generalization of classical Ehrhart theory, where a lattice point is replaced by a geometric object $Q$. 
We anticipate that analyzing such higher-dimensional configurations will yield novel structural insights into classical Ehrhart theory itself.
Third, from an applied perspective, extending the first Lake Shibireko problem mentioned below---formulated for nested squares---to general collections of multiple polytopes requires a systematic understanding of the asymptotic and algebraic behavior of such counting functions.
\begin{q}[{First Lake Shibireko problem, \cite[Theorem~2]{Hir22}}]\label{q-1st-shibireko}
In a square $P$, draw grid lines to divide each edge into $t$ equal segments, and consider all squares whose edges lie on these grid lines or the original edges. 
Find the limit of the ratio of the arithmetic mean of the areas of those sub-squares to the entire area of $P$ as $t \to \infty$. 
\end{q}
\begin{rmk}
The author conceived Problem~\ref{q-1st-shibireko} while gazing at the lattice pattern of a camping chair by the lakeside of Lake Shibireko in Yamanashi, Japan.
\end{rmk}
\subsection{Relative Ehrhart Functions}
In light of the above discussion, we define the relative Ehrhart function as follows. 
Let $\mathbb Z_{> 0}$ denote the set of positive integers. 
Let $n \geq 0$ and $r \geq 2$ be integers. 
Let $P_1$, $\dots$, $P_r$ be convex rational polytopes in $\mathbb R^n$ satisfying $\dim P_1 \geq \dots \geq \dim P_r$. 
For a set $S \subset \mathbb R^n$, let $S^\circ$ denote the relative interior of $S$. 
\begin{df}[Multi-variable relative Ehrhart function]
Let $t_1$, $\dots$, $t_r \in \mathbb Z_{> 0}$.
\begin{enumerate}
\item[(1)]
\begin{enumerate}
\item[(i)]
If a sequence of polytopes $C_1 \supset \cdots \supset C_r$ can be chosen in the following manner, we denote the total number of such sequences by $\ehr (P_1,\dots,P_r;t_1,\dots,t_r)$.
\begin{itemize}
\item
Let $C_1 = t_1P_1$.
\item
For $i = 1$, $\dots$, $r-1$ in sequence, choose a polytope $C_{i+1} = t_{i+1}P_{i+1}+\bm a_i$ obtained by translating $t_{i+1}P_{i+1}$ by $\bm a_i \in \mathbb Z^n$ such that $C_i \supset C_{i+1}$.
\end{itemize}
\item[(ii)]
Otherwise, we define $\ehr (P_1,\dots,P_r;t_1,\dots,t_r) = 0$.
\end{enumerate}
\item[(2)]
\begin{enumerate}
\item[(i)]
If a sequence of polytopes $C_1 \supset \cdots \supset C_r$ can be chosen in the following manner, we denote the total number of such sequences by $\ehr^\circ (P_1,\dots,P_r;t_1,\dots,t_r)$.
\begin{itemize}
\item
Let $C_1 = t_1P_1$.
\item
For $i = 1$, $\dots$, $r-1$ in sequence, choose a polytope $C_{i+1} = t_{i+1}P_{i+1}+\bm a_i$ obtained by translating $t_{i+1}P_{i+1}$ by $\bm a_i \in \mathbb Z^n$ such that $C_i{}^\circ \supset C_{i+1}$.
\end{itemize}
\item[(ii)]
Otherwise, we define $\ehr^\circ (P_1,\dots,P_r;t_1,\dots,t_r) = 0$.
\end{enumerate}
\end{enumerate}
As $t_1$, $\dots$, $t_r$ vary, the functions in (1) and (2) are called the {\itshape relative Ehrhart functions}. 
Here, we refer to $C_1 \supset \cdots \supset C_r$ as an {\itshape $r$-layered matryoshka} with $C_i$ as the {\itshape $i$-th container} for each $i \in \{ 1,\dots,r\}$ (see Figure~\ref{fig-1}). 
We also refer to $C_r$ as the {\itshape core}. 
Furthermore, $P_i$, $t_i$, $P_r$ are called the {\itshape type of the $i$-th container}, the {\itshape $i$-th dilation factor}, and the {\itshape type of the core}, respectively.\par
\end{df}
The relative Ehrhart functions are nothing but the counting functions of matryoshkas.
\begin{figure}[h]
\centering
\includegraphics{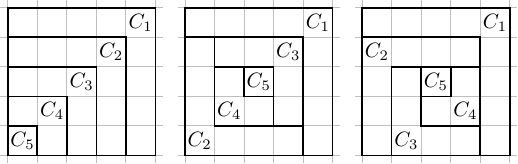}
\caption{Examples of $5$-layered matryoshkas with container types $P_1 = \cdots = P_4 = [0,1]^2$, core type $P_5 = [0,1]^2$, and dilation factors $(t_1,t_2,t_3,t_4,t_5) = (5,4,3,2,1)$.}\label{fig-1}
\end{figure}\par
Hereafter, let $P$ and $Q$ be convex rational polytopes in $\mathbb R^n$ satisfying $\dim P = d \geq \dim Q$. 
\begin{df}[Single-variable relative Ehrhart function]
We define 
\begin{align*} 
\ehr (P,Q;t) &:= \ehr (P,Q;t,1), \\ 
\ehr^\circ (P,Q;t) &:= \ehr^\circ (P,Q;t,1). 
\end{align*} 
\end{df}
\begin{eg}
Let $P$ be a $d$-dimensional convex integral polytope, and let $\bm x$ be a lattice point. 
Then $\ehr (P,\{\bm x\};t)$ and $\ehr^\circ (P,\{\bm x\};t)$ coincide with the classical Ehrhart polynomials $\ehr (P;t) := \# (tP\cap\mathbb Z^n)$ and $\ehr^\circ (P;t) := \# (tP^\circ\cap\mathbb Z^n)$, respectively.
\end{eg}
\begin{eg}\label{eg-cuboid}
Let $P = \prod_{i = 1}^{n}[0,a_i]$ and $Q = \prod_{i = 1}^{n}[0,b_i]$ with $a_i$, $b_i \in \mathbb Z$, $a_i \geq b_i \geq 0$, and $a_i > 0$. 
Then 
\begin{align*} 
\ehr (P,Q;t) &= \prod_{i = 1}^{n}(a_it-b_i+1), \\ 
\ehr^\circ (P,Q;t) &= \prod_{i = 1}^{n}(a_it-b_i-1) \quad \left( t \geq \max_{1 \leq i \leq n}\frac{b_i+1}{a_i}\right) 
\end{align*} 
(see Proposition~\ref{prop-prism-ehr} for details). 
These equations also hold if $b_1 = \dots = b_n = 0$, namely, if $Q = \{\bm 0\}$.
\end{eg}
\begin{rmk}
Let $d > 0$ and $a \geq 0$ be integers.
\begin{itemize}
\item
If $P = Q = [0,2]^{d-1}\times [0,a+1]$, then $\ehr (P,Q;t) = (2t-1)^{d-1}((a+1)t-a)$.
\item
If $P = [0,1]^{d-1}\times [0,a-1]$, $Q = \{ 0\} ^{d-1}\times [0,a-1]$, and $a > 1$, then $\ehr^\circ (P,Q;t) = (t-1)^{d-1}((a-1)t-a)$ $(t \geq 2)$.
\item
If $P = [0,1]^d$ and $Q = \{\bm 0\}$, then $\ehr^\circ (P,Q;t) = (t-1)^{d}$.
\end{itemize}
These examples imply that the constant terms of $\ehr (P,Q;t)$ and $\ehr^\circ (P,Q;t)$ can take any integer value and any nonzero integer value, respectively.
\end{rmk}
Hereafter, we use the following notation to present further examples.
\begin{note}
\begin{itemize}
\item
Let $\square _d$ denote the $d$-dimensional unit hypercube $[0,1]^d$.
\item
Let $\Delta _d$ denote the $d$-dimensional standard unit simplex, i.e., the simplex with vertices at the origin and the $d$ unit coordinate points.
\item
Let $F_d$ denote the facet of $\Delta _d$ connecting the $d$ unit coordinate points.
\item
Let $L_d$ denote the segment connecting $(0,\dots,0)$ and $(1,\dots,1)$ in $\mathbb R^d$.
\item
On the $xy$-plane, let $O$ denote the origin, let $X_a$ denote the point $(a,0)$, and let $Y_b$ denote the point $(0,b)$.
\item
For each integer $t > 0$, let $S_d(t)$ denote the $t$-th $d$-dimensional simplex
number, which is defined by 
\[ S_0(t) = 1 \quad\text{and}\quad S_d(t) = \sum_{i = 1}^{t}S_{d-1}(i) \quad (d > 0).\]
\end{itemize}
\end{note}
\begin{eg}\label{eg-cb-cb}
Let $d > 0$ be an integer. 
In each of the following cases (see Figure~\ref{fig-2}), we have 
\begin{align*} 
\ehr (P,Q;t) &= t^d, \\ 
\ehr^\circ (P,Q;t) & = (t-2)^d \quad (t \geq 2). 
\end{align*} 
\begin{enumerate}
\item[(1)]
The case where $P = Q = \square _d$.
\item[(2)]
The case where $P = \square _d$ and $Q = \Delta _d$.
\item[(3)]
The case where $P = \square _d$ and $Q = F_d$.
\item[(4)]
The case where $P = \square _d$ and $Q = L_d$. 
\end{enumerate}
\begin{figure}[h]
\centering
\includegraphics{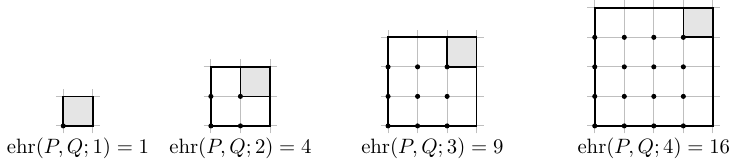}
\caption{The values of $\ehr (P,Q;t)$ for $1 \leq t \leq 4$ in Example~\ref{eg-cb-cb}(1) in the case where $d = 2$.}\label{fig-2}
\end{figure}
\end{eg}
\begin{eg}\label{eg-spx-spx}
Let $d > 0$ be an integer. 
In each of the following cases (see Figure~\ref{fig-3}), we have 
\begin{align*} 
\ehr (P,Q;t) &= S_d(t), \\ 
\ehr^\circ (P,Q;t) &= S_d(t-d-1) \quad (t \geq d). 
\end{align*} 
\begin{enumerate}
\item[(1)]
The case where $P = Q = \Delta _d$.
\item[(2)]
The case where $P = \Delta _d$ and $Q = F_d$.
\end{enumerate}
\begin{figure}[h]
\centering
\includegraphics{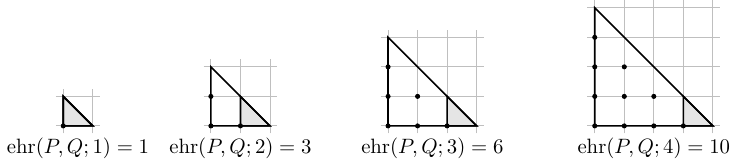}
\caption{The values of $\ehr (P,Q;t)$ for $1 \leq t \leq 4$ in Example~\ref{eg-spx-spx}(1) in the case where $d = 2$.}\label{fig-3}
\end{figure}
\end{eg}
\begin{eg}\label{eg-spx-cb}
Let $d > 0$ be an integer. 
In each of the following cases (see Figure~\ref{fig-4}), we have 
\begin{align*} 
\ehr (P,Q;t) &= S_d(2t-1), \\ 
\ehr^\circ (P,Q;t) & = S_d(2t-2d-1) \quad (t \geq d+1), 
\end{align*} 
and therefore \eqref{eq-ri-ehr}. 
Furthermore, we have $\ehr (P,Q;t) = \ehr^\circ (P,Q;t+d)$.
\begin{enumerate}
\item[(1)]
The case where $P = 2\Delta _d$ and $Q = \square _d$.
\item[(2)]
The case where $P = 2\Delta _d$ and $Q = L_d$.
\end{enumerate}
\begin{figure}[h]
\centering
\includegraphics{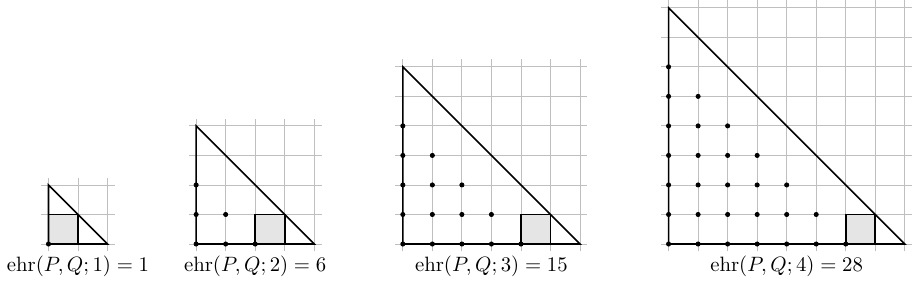}
\caption{The values of $\ehr (P,Q;t)$ for $1 \leq t \leq 4$ in Example~\ref{eg-spx-cb}(1) in the case where $d = 2$.}\label{fig-4}
\end{figure}
\end{eg}
\subsection{Eventual Polynomiality}
The following theorem is the first main result of this paper (see Section~\ref{sec-poly} for the proof). 
Recall that $\dim P = d.$ 
Let $\vol_d(P)$ denote the $d$-dimensional volume of $P$. 
\begin{thm}[Eventual (quasi-)polynomiality]\label{thm-poly-ehr}
\begin{enumerate}
\item[\textup{(1)}]
Both $\ehr (P,Q;t)$ and $\ehr^\circ (P,Q;t)$ are eventual quasi\hyp{}polynomials with leading term $\vol _d(P)t^d$ over $\mathbb Q$.
\item[\textup{(2)}]
If $P$ and $Q$ are integral, then both $\ehr (P,Q;t)$ and $\ehr^\circ (P,Q;t)$ are eventual polynomials over $\mathbb Z[1/d!]$.
\end{enumerate}
\end{thm}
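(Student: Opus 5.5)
The plan is to turn the count into a lattice-point count in a parametrised polytope. Then I would compare that polytope with $tP$, using induction on $d=\dim P$.

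\textbf{Reduction.} Write $P=\{\bm x : A\bm x\le \bm b,\ C\bm x=\bm e\}$ with $A$, $C$ integral, each row $\bm\alpha_i$ of $A$ primitive, and $\bm b$, $\bm e$ rational. For lattice $\bm a$, the containment $Q+\bm a\subset tP$ holds if and only if $\bm v+\bm a\in tP$ for every vertex $\bm v$ of $Q$. This condition reads
\[ C\bm a=t\bm e-C\bm v \ \text{ for all } \bm v, \qquad \bm\alpha_i\bm a\le tb_i-m_i \ \text{ for all } i, \quad\text{where } m_i:=\max_{\bm v}\bm\alpha_i\bm v. \]
If $C\bm v$ is not constant on the vertices of $Q$, then both functions vanish identically for $t>0$. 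I regard this as an edge case to be excluded or handled separately, since then the leading term is not $\vol_d(P)t^d$.

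Otherwise $\ehr(P,Q;t)=\#(P_t\cap\mathbb Z^n)$, where
\[ P_t=\{\bm x : A\bm x\le t\bm b-\bm m,\ C\bm x=t\bm e-\bm c\} \]
is the Minkowski difference $tP\ominus Q$. For the interior version, $C_1{}^\circ\supset C_2$ replaces the inequalities by strict ones. After clearing denominators, a strict inequality $\bm\alpha_i\bm a<tb_i-m_i$ becomes $\bm\alpha_i\bm a\le \lceil tb_i-m_i\rceil-1$, so the same argument applies to $\ehr^\circ$.

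\textbf{Comparison with a translate of $tP$.} Choose a rational point $\bm w$ with $C\bm w=\bm c$; the affine-hull condition lets me shift everything by $-\bm w$. For $t\gg0$, every facet inequality of $tP$ is moved inward by the bounded amount $m_i$, so $P_t$ is a full-dimensional polytope in the affine space $\{C\bm x=t\bm e-\bm c\}$. Its lattice points are those of $tP-\bm w$, minus the lattice points lying in the slabs
\[ \{\,tb_i-m_i-\bm\alpha_i\bm w<\bm\alpha_i\bm x\le tb_i-\bm\alpha_i\bm w\,\}. \]
Because $\bm\alpha_i$ is primitive, the lattice points in slab $i$ lie on finitely many lattice hyperplanes $\bm\alpha_i\bm x=k$. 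Their number is bounded independently of $t$ and is eventually periodic in $t$.

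Inclusion--exclusion over the index sets $I$ of slabs then writes the difference as an alternating sum. Each term counts lattice points in a set of the form
\[ \{A\bm x\le t\bm b-\bm m',\ \bm\alpha_i\bm x=tb_i-k_i\ (i\in I),\ C\bm x=\dots\}, \]
which is again of the same parametrised type but has dimension $\le d-1$ (or is empty). By the induction hypothesis, in the stronger form "the number of lattice points in a polytope $\{A\bm x\le t\bm b+\bm\beta\}$ with fixed $A$ and constant $\bm\beta$ is an eventual quasi-polynomial of degree $\le$ its dimension", each term has degree $\le d-1$.

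The main term is $\#((tP-\bm w)\cap\mathbb Z^n)$. For $\bm w$ rational this is a classical Ehrhart quasi-polynomial in $t$ on each residue class, with leading term $\vol_d(P)t^d$. Summing these contributions proves (1).

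\textbf{Integral case, and the main obstacle.} For (2), when $P$ and $Q$ are integral, the offsets $m_i$, $c$, and the slab hyperplane levels $k$ are all integers. I would therefore take $\bm w$ to be a lattice vertex difference, so that all periods should become $1$ and the classical Ehrhart polynomiality applies.

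The obstacle is that the lower-dimensional slice polytopes appearing in the induction, such as $\{\bm\alpha_i\bm x=tb_i-k\}\cap(\dots)$, need not be integral polytopes. Classical polynomiality therefore does not directly apply to them. I would first try to strengthen the inductive statement to polytopes $\{A\bm x\le t\bm b+\bm\beta\}$ with integer data whose $t$-dilate part $\{A\bm x\le \bm b\}$ is integral. For $t\gg0$ such a polytope is a "deformation" of $tP$ with the same normal fan, and one can count it via Brion/Barvinok-type vertex-cone decompositions: each vertex of the polytope moves as $t\bm v_0+\bm u$ with $\bm u$ fixed. The denominators of the resulting coefficients come from the volume normalisation of the simplices in a unimodular triangulation, which is what I expect to produce the ring $\mathbb Z[1/d!]$.
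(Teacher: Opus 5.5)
Your route differs from the paper's. The paper decomposes $P$ into pyramids with bases parallel to $Q$, sums cross-sections via Faulhaber's formula inside an induction on $d$, and uses a prism when $\dim Q=\dim P$. You instead encode the count as $\#(P_t\cap\mathbb Z^n)$ with $P_t=tP\ominus Q=\{A\bm x\le t\bm b-\bm m,\ C\bm x=t\bm e-\bm c\}$. This is a sound starting point that needs no decomposition of $P$; in the paper's inclusion--exclusion over pyramids, a translate of $Q$ inside $tP$ need not lie in a single $tP_i$. Your remark that the statement fails when $\mathrm{aff}(Q)$ is not parallel to $\mathrm{aff}(P)$ is correct. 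Two small fixes: take $\bm w\in Q$, so that $\bm\alpha_i\bm w\le m_i$ and $P_t\subset tP-\bm w$; the slabs are then $\{tb_i-m_i<\bm\alpha_i\bm x\le tb_i-\bm\alpha_i\bm w\}$.

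\textbf{The induction does not close.} The inductive step you carry out treats only one kind of polytope: a translate of a $d$-dimensional dilate $tP$, cut inward by bounded slabs along its own facets. The slice polytopes you feed into the ``stronger form'' are of a different kind. Their $t$-part is the face $F_I=\{\bm x\in P:\bm\alpha_i\bm x=b_i,\ i\in I\}$, which can have smaller dimension than the slice itself. Take $P=\mathrm{conv}\{(\pm1,\pm1,0),(0,0,1)\}$, $Q=\mathrm{conv}\{\bm 0,(0,0,2)\}$ and $\bm w=\bm 0$. The term for the two facets $\pm x_1+x_3\le 1$ at level $t-1$ is the segment $\{0\}\times[-1,1]\times\{t-1\}$, whereas $F_I$ is the apex. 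This segment is not a translate of $tF_I$ minus slabs, so your comparison cannot be repeated. You must either prove the strengthened claim for arbitrary $\{A\bm x\le t\bm b+\bm\beta\}$, or cite eventual quasi-polynomiality for parametric polytopes. One way to prove it is to slice first along the implicit equalities of $\{A\bm x\le \bm b\}$ and allow two-sided slabs.

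\textbf{Part (2) is only a plan.} Your Brion idea is the right one, and it replaces the whole induction:
\begin{itemize}
\item For $t\gg 0$ the face structure of $P_t$ is constant. For each choice of tight rows, the set of $t$ at which the corresponding system is feasible is an interval.
\item Each vertex solves a fixed nonsingular system whose right-hand side is affine in $t$. So it equals $t\bm v_0+\bm u$, with $\bm v_0$ a vertex of $P$ and $\bm u$ fixed.
\item The tangent cones are fixed, so Brion's theorem gives $\sum_{\bm v}\bm z^{t\bm v_0}g_{\bm v}(\bm z)$ with fixed rational functions $g_{\bm v}$.
\item The value at $\bm z=\bm 1$ is a polynomial in $t$ when $P$ is integral, and a polynomial on each residue class of $t$ in general.
\item The interior count is handled the same way. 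When $b_i\in\mathbb Z$ we have $\lceil tb_i-m_i\rceil-1=tb_i-\lfloor m_i\rfloor-1$; otherwise this quantity is $tb_i$ plus a constant on each residue class of $t$.
\end{itemize}
No triangulation is needed to get $\mathbb Z[1/d!]$. An eventual polynomial of degree $\le d$ that takes integer values is a $\mathbb Z$-combination of the $\binom{t}{k}$ with $k\le d$.

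For $d<n$, your main-term claim needs an extra hypothesis, a defect the statement shares. With $P=[0,1]\times\{0\}$ and $Q=\{(0,\tfrac12)\}$, the count is identically $0$, because $\mathrm{aff}(tP)-\bm w$ misses $\mathbb Z^n$.
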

\begin{df}
By Theorem \ref{thm-poly-ehr}, $\ehr (P,Q;t)$ and $\ehr^\circ (P,Q;t)$ are called {\itshape relative Ehrhart eventual quasi-polynomials} if $P$ and $Q$ are rational, and {\itshape relative Ehrhart eventual polynomials} if $P$ and $Q$ are integral.
\end{df}
\begin{rmk}
Let $P = \square _2$ and $Q = a\square _2$ with $a \in \mathbb Z_{> 0}$. 
Then $\ehr (P,Q;t) = 0$ for $0 \leq t < a$ and $\ehr (P,Q;t) = (t+a-1)^2$ for $t \geq a$, so $\ehr (P,Q;t)$ exhibits nonzero polynomial behavior only for $t \geq a-1$. 
Also, in Example~\ref{eg-cuboid}, when $a_1 = a_2 = b_1 = b_2 = 1$, we have $\ehr^\circ (P,Q;1) = 0$ and $\ehr^\circ (P,Q;t) = (t-2)^2$ $(t \geq 2)$. 
In this way, in general, $\ehr (P,Q;t)$ and $\ehr^\circ (P,Q;t)$ become (quasi-)polynomials only for sufficiently large $t$. 
Such functions are called {\itshape eventual (quasi-)polynomials}. 
Hereafter, formulas concerning $\ehr (P,Q;t)$ and $\ehr^\circ (P,Q;t)$ are stated under the assumption that they hold within the range where these functions show nonzero (quasi-)polynomial behavior.
\end{rmk}
\begin{eg}\label{eg-quasi}
Let $P = (3/2)\square _2$ and $Q = \square _2$. 
Then 
\begin{align*} 
\ehr (P,Q;t) &= \begin{cases} 
(3\cdot t/2)^2 & \text{if }t \equiv 0\ (\bmod\ 2), \\ 
(3\cdot (t+1)/2-2)^2 & \text{if }t \equiv 1\ (\bmod\ 2) 
\end{cases} \\ 
&= \begin{cases} 
9t^2/4 & \text{if }t \equiv 0\ (\bmod\ 2), \\ 
(3t-1)^2/4 & \text{if }t \equiv 1\ (\bmod\ 2) 
\end{cases} 
\end{align*} 
becomes a quasi-polynomial with period $2$ (see Figure~\ref{fig-5}).
\begin{figure}[h]
\centering
\includegraphics{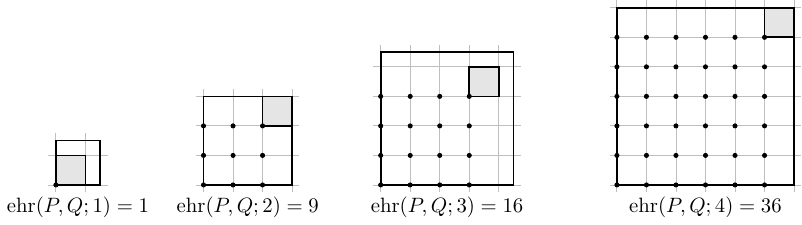}
\caption{The values of $\ehr (P,Q;t)$ for $1 \leq t \leq 4$ in Example~\ref{eg-quasi}.}\label{fig-5}
\end{figure}
\end{eg}
\subsection{Fundamental Functional Equations and Inequalities}
In this paper, we adopt the following condition for $Q$ being inscribed in $P$.
\begin{df}[Inscription of polytopes]
Suppose that $\dim Q > 0$. 
We say that $Q$ is {\itshape inscribed} in $P$ if $P \supset Q$ and every facet of $P$ contains at least one vertex of $Q$.
\end{df}
The following formulas are fundamental (see Section~\ref{subsec-fe} for the proof). 
\begin{thm}[Fundamental functional equations]\label{thm-kplq-ehr}
Let $k$, $l \in \mathbb Z_{> 0}$ with $k \geq l$. 
If $\dim Q > 0$ and $Q$ is inscribed in $P$, then 
\begin{align} 
\ehr (kP,lQ;t) &= \ehr (P,Q;kt-l+1), \label{eq-kplq-ehr} \\ 
\ehr^\circ (kP,lQ;t) &= \ehr^\circ (P,Q;kt-l+1). \label{eq-kplq-ehrc} 
\end{align} 
\end{thm}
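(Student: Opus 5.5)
The plan is to show that, under inscription, both sides of \eqref{eq-kplq-ehr} and \eqref{eq-kplq-ehrc} count exactly the same set of lattice vectors. That set is a lattice-point set of a dilate of $P$ (or of $P^\circ$). The tool is a facet (half-space) description of $P$ together with the observation that inscription pins down the support function of $Q$ in every facet normal direction.

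First I fix an irredundant description of $P$ inside its affine hull:
\[ P=\{\bm x\in\mathbb R^n : \langle \bm u_i,\bm x\rangle\le b_i\ (1\le i\le m),\ \langle \bm v_j,\bm x\rangle=c_j\ (1\le j\le n-d)\}. \]
Here each facet of $P$ is $F_i=P\cap\{\langle \bm u_i,\bm x\rangle=b_i\}$, and the relative interior $P^\circ$ is obtained by making the inequalities strict while keeping the equalities. Since $Q\subset P$, we have $\langle \bm v_j,\bm y\rangle=c_j$ on $Q$ and $\max_{\bm y\in Q}\langle\bm u_i,\bm y\rangle\le b_i$. Inscription says that some vertex of $Q$ lies on $F_i$, so in fact $h_Q(\bm u_i):=\max_{\bm y\in Q}\langle \bm u_i,\bm y\rangle=b_i$ for every $i$. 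This is the only place where inscription is used. The hypothesis $\dim Q>0$ only serves to make the definition meaningful.

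Next, for a real $s\ge 1$, I characterize translations. A translate satisfies $Q+\bm a\subset sP$ if and only if $\langle \bm u_i,\bm a\rangle+h_Q(\bm u_i)\le sb_i$ for all $i$ and $\langle\bm v_j,\bm a\rangle+c_j=sc_j$ for all $j$. Using $h_Q(\bm u_i)=b_i$, this becomes $\bm a\in(s-1)P$. In the same way, $Q+\bm a\subset sP^\circ$ holds if and only if $\bm a\in(s-1)P^\circ$. For the latter I use that $Q+\bm a$ lies in the relative interior iff every point satisfies the strict inequalities, i.e.\ iff the maximum over the compact set $Q$ is strictly less than $sb_i$. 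Replacing $P$ by $kP$ and $Q$ by $lQ$, the facet normals are unchanged, the right-hand sides scale to $kb_i$ and $kc_j$, and $h_{lQ}(\bm u_i)=lb_i$. The same computation then gives
\[ lQ+\bm a\subset t(kP)\iff \bm a\in(kt-l)P, \]
and the analogous statement holds with interiors.

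Finally, I put $s=kt-l+1$, which is a positive integer because $k\ge l$ and $t\ge 1$. By the previous step, $Q+\bm a\subset sP$ iff $\bm a\in(kt-l)P$. Hence both $\ehr(kP,lQ;t)$ and $\ehr(P,Q;kt-l+1)$ equal $\#\bigl((kt-l)P\cap\mathbb Z^n\bigr)$, and both interior versions equal $\#\bigl((kt-l)P^\circ\cap\mathbb Z^n\bigr)$. This proves \eqref{eq-kplq-ehr} and \eqref{eq-kplq-ehrc}. The "otherwise $0$" clauses of the definition match automatically, since both sides are empty in exactly the same cases.

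The step requiring the most care is the translation criterion when $d<n$. The equality constraints must force $\bm a$ into the correct affine subspace $\mathrm{aff}((s-1)P)$, and strictness must only be imposed on the facet inequalities, which is exactly the meaning of relative interior here. I would also check the degenerate value $s-1=0$ (when $kt=l$), where $(s-1)P=\{\bm 0\}$. Everything else is a direct support-function computation.
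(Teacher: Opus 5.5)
Your proposal is correct and is essentially the paper's argument. Inscription forces $\max_{\bm y\in Q}\langle\bm u_i,\bm y\rangle=b_i$ for every facet normal, so fitting a translate of $Q$ (resp.\ $lQ$) into a dilate is governed by the same linear system as fitting $P$ (resp.\ $lP$), and both sides count lattice points of $(kt-l)P$ or its relative interior. The paper packages this as a reduction to $P=Q$ via Proposition~\ref{prop-pqi-ehr} followed by $t(kP)-lP=(kt-l)P$, while you do the facet computation directly. That direct route is somewhat cleaner: you verify every facet inequality and the affine-hull equalities, which the paper's write-up glosses over, and you handle the mismatched scalings of $kP$ and $lQ$ without a reduction.

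One small blemish is the interior case at $kt=l$. There the strict system $\langle\bm u_i,\bm a\rangle<0$, $\langle\bm v_j,\bm a\rangle=0$ has no solutions, by boundedness of $P$, so it does not describe $0\cdot P^\circ=\{\bm 0\}$ as you claim. This does not affect the theorem: both sides reduce to this identical system, so both equal $0$.
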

By the following formula, the behavior of the multivariable relative Ehrhart function is clarified from that of the single-variable relative Ehrhart function (see Section~\ref{subsec-prod} for the proof).
\begin{thm}[Product formulas]\label{thm-prod-ehr}
\begin{enumerate}
\item[\textup{(1)}]
We have 
\begin{align} 
\ehr (P_1,\dots,P_r;t_1,\dots,t_r) &= \prod_{i = 1}^{r-1}\ehr (P_i,t_{i+1}P_{i+1};t_i), \label{eq-prod-ehr} \\ 
\ehr^\circ (P_1,\dots,P_r;t_1,\dots,t_r) &= \prod_{i = 1}^{r-1}\ehr^\circ (P_i,t_{i+1}P_{i+1};t_i) \label{eq-prod-ehrc} 
\end{align} 
for all $t_1 \geq \dots \geq t_r \gg 0$. 
\item[\textup{(2)}]
If $\dim P_r > 0$ and $P_{i+1}$ is inscribed in $P_i$ for each $i \in \{ 1,\dots,r-1\}$, then 
\begin{align} 
\ehr (P_1,\dots,P_r;t_1,\dots,t_r) &= \prod_{i = 1}^{r-1}\ehr (P_i,P_{i+1};t_i-t_{i+1}+1), \label{eq-prodi-ehr} \\ 
\ehr^\circ (P_1,\dots,P_r;t_1,\dots,t_r) &= \prod_{i = 1}^{r-1}\ehr^\circ (P_i,P_{i+1};t_i-t_{i+1}+1) \label{eq-prodi-ehrc} 
\end{align} 
for all $t_1 \geq \dots \geq t_r \gg 0$.
\end{enumerate}
\end{thm}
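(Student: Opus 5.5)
The plan is to prove part (1) by decomposing the count of matryoshkas layer by layer, and then to derive part (2) from part (1) using Theorem~\ref{thm-kplq-ehr}.

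\textbf{Part (1).} The key observation is that the set of admissible choices for the $(i+1)$-th container depends on the $i$-th container $C_i$ only up to lattice translation. Write $C_i = t_iP_i + \bm c$ with $\bm c \in \mathbb Z^n$; for $i=1$ take $\bm c = \bm 0$, and for $i \geq 2$ take $\bm c = \bm a_{i-1}$. The map $\bm a \mapsto \bm a - \bm c$ is a bijection of $\mathbb Z^n$, and it carries
\[
\{\bm a \in \mathbb Z^n : t_{i+1}P_{i+1} + \bm a \subset t_iP_i + \bm c\}
\]
onto
\[
\{\bm b \in \mathbb Z^n : t_{i+1}P_{i+1} + \bm b \subset t_iP_i\}.
\]
By the definition of $\ehr(\,\cdot\,,\cdot\,;t)=\ehr(\,\cdot\,,\cdot\,;t,1)$, with the core type taken to be the rational polytope $t_{i+1}P_{i+1}$ (which satisfies the dimension requirement $\dim P_i \geq \dim P_{i+1}$), the cardinality of the second set is exactly $\ehr(P_i,t_{i+1}P_{i+1};t_i)$.

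The same argument applies to $\ehr^\circ$. The only extra fact needed is that the relative interior commutes with translation, $(t_iP_i+\bm c)^\circ = t_iP_i^\circ+\bm c$, so the strict-containment condition is preserved by the same bijection.

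Counting the sequences $(\bm a_1,\dots,\bm a_{r-1})$ in order, the number of choices at step $i$ is independent of the earlier choices. The total is therefore the product in \eqref{eq-prod-ehr} and \eqref{eq-prod-ehrc}. The degenerate case (ii) of the definition is also consistent: if some step admits no choice, the corresponding factor is $0$, so the product is $0$. The restriction $t_1 \geq \dots \geq t_r \gg 0$ is only needed so that all factors lie in the range where the eventual (quasi-)polynomial conventions of the preceding remark apply. The identity itself holds for all positive $t_i$.

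\textbf{Part (2).} I would rewrite each factor so that Theorem~\ref{thm-kplq-ehr} applies directly. By definition, dilating the container by $t_i$ gives
\[
\ehr(P_i,t_{i+1}P_{i+1};t_i) = \ehr(t_iP_i,t_{i+1}P_{i+1};1).
\]
Now apply \eqref{eq-kplq-ehr} with $P=P_i$, $Q=P_{i+1}$, $k=t_i$, $l=t_{i+1}$ and $t=1$. Its hypotheses hold: $k \geq l$ because $t_i \geq t_{i+1}$; $\dim P_{i+1} \geq \dim P_r > 0$; and $P_{i+1}$ is inscribed in $P_i$ by assumption. This yields $\ehr(P_i,P_{i+1};t_i-t_{i+1}+1)$, and substituting into \eqref{eq-prod-ehr} gives \eqref{eq-prodi-ehr}. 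The interior version \eqref{eq-prodi-ehrc} follows in the same way from \eqref{eq-kplq-ehrc} and \eqref{eq-prod-ehrc}.

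\textbf{Expected difficulties.} A tempting shortcut is to apply Theorem~\ref{thm-kplq-ehr} to $\ehr(P_i,t_{i+1}P_{i+1};t_i)$ with $k=1$, but this fails because it would require $1 \geq t_{i+1}$. The rewriting via the dilated container avoids this. Beyond that, the main care point is the translation-invariance bookkeeping in part (1), in particular that the vectors $\bm a_i$ are absolute translates of $t_{i+1}P_{i+1}$ rather than offsets relative to $C_i$. I expect no deeper obstacle.
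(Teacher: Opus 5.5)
Your proposal is correct and follows essentially the same route as the paper. Your layer-by-layer bijection $\bm a\mapsto\bm a-\bm c$ is the paper's change of variables $\bm b_i=\bm a_i-\bm a_{i-1}$, which factors the count into $\prod_i\ehr(t_iP_i,t_{i+1}P_{i+1};1)=\prod_i\ehr(P_i,t_{i+1}P_{i+1};t_i)$. Part (2) then follows from Theorem~\ref{thm-kplq-ehr} with $k=t_i$, $l=t_{i+1}$, $t=1$, exactly as you propose.
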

The relative Ehrhart functions are bounded as follows (see Section~\ref{sec-ineq} for the proof). 
\begin{thm}[Absolute inequalities]\label{thm-ineq-ehr}
\begin{enumerate}
\item[\textup{(1)}]
If $k$, $m \in \mathbb Z_{> 0}$ satisfy $kQ+\bm v \subset mP$ for some $\bm v \in \mathbb Z^n$, then 
\begin{align} 
\ehr (Q;kt-1) &\leq \ehr (P,Q;mt), \label{eq-low-ehr} \\ 
\ehr^\circ (Q;kt-1) &\leq \ehr^\circ (P,Q;mt) \label{eq-low-ehrc}
\end{align} 
for all $t \gg 0$.
\item[\textup{(2)}]
If $\dim P = \dim Q$ and $l$, $m \in \mathbb Z_{> 0}$ satisfy $mP \subset lQ+\bm w$ for some $\bm w \in \mathbb Z^n$, then 
\begin{align} 
\ehr (P,Q;mt) &\leq \ehr (Q;lt-1), \label{eq-up-ehr} \\ 
\ehr^\circ (P,Q;mt) &\leq \ehr^\circ (Q;lt-1) \label{eq-up-ehrc} 
\end{align} 
for all $t \gg 0$.
\end{enumerate}
\end{thm}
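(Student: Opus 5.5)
The plan is to build explicit injections between lattice points of a dilate of $Q$ and admissible translation vectors, using two facts about compact convex sets. First, for $s\ge 1$ we have $Q+(s-1)Q=sQ$. Second, Minkowski cancellation holds: if $Q+\bm y\subset Q+R$ with $R$ compact convex, then $\bm y\in R$. To see this, compare support functions: $h_Q(\bm u)+\langle \bm y,\bm u\rangle\le h_Q(\bm u)+h_R(\bm u)$ for all $\bm u$, hence $\bm y\in R$. Throughout, $\ehr(Q;s)$ and $\ehr^\circ(Q;s)$ denote the classical counts $\#(sQ\cap\mathbb Z^n)$ and $\#(sQ^\circ\cap\mathbb Z^n)$.

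For (1), scaling $kQ+\bm v\subset mP$ by $t$ gives $ktQ+t\bm v\subset mtP$. I will map a lattice point $\bm x\in (kt-1)Q$ to the translation vector $\bm a=\bm x+t\bm v\in\mathbb Z^n$. Then
\[ Q+\bm a\subset Q+(kt-1)Q+t\bm v=ktQ+t\bm v\subset mtP, \]
and the map $\bm x\mapsto\bm a$ is injective, which gives \eqref{eq-low-ehr}. For \eqref{eq-low-ehrc} I split into two cases.

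\begin{itemize}
\item If $\dim Q<d$, then $\ehr^\circ(Q;kt-1)$ grows like $t^{\dim Q}$. On the other hand, $\ehr^\circ(P,Q;mt)$ has leading term $\vol_d(P)m^dt^d$ by Theorem~\ref{thm-poly-ehr}, so the inequality holds for $t\gg 0$ by comparing degrees.
\item If $\dim Q=d$, then $\operatorname{aff}(ktQ+t\bm v)=\operatorname{aff}(mtP)$. Take $\bm x\in((kt-1)Q)^\circ$. The set $Q+((kt-1)Q)^\circ$ is relatively open in this common affine hull and is contained in $ktQ$, so it lies in $(ktQ)^\circ$. Translating by $t\bm v$ puts $Q+\bm a$ inside $(mtP)^\circ$.
\end{itemize}

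This case distinction is the step I expect to be the delicate one. When $\dim Q<d$ the image of $Q$ may sit in a proper face of $mtP$, so the direct injection fails; the degree argument is what rescues it. This is exactly why the statement is only claimed for $t\gg0$.

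For (2), scaling $mP\subset lQ+\bm w$ gives $mtP\subset ltQ+t\bm w$. Let $\bm a\in\mathbb Z^n$ satisfy $Q+\bm a\subset mtP$. Then
\[ Q+(\bm a-t\bm w)\subset ltQ=Q+(lt-1)Q, \]
so Minkowski cancellation gives $\bm a-t\bm w\in(lt-1)Q\cap\mathbb Z^n$. The map $\bm a\mapsto\bm a-t\bm w$ is injective, which gives \eqref{eq-up-ehr}.

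For \eqref{eq-up-ehrc}, suppose $Q+\bm a\subset(mtP)^\circ$. Since $\dim P=\dim Q$, we have $(mtP)^\circ\subset(ltQ)^\circ+t\bm w$, and all these sets share one affine hull $A$. By compactness there is $\varepsilon>0$ such that $Q+\bm y+(\varepsilon B\cap A_0)\subset ltQ$, where $\bm y=\bm a-t\bm w$ and $A_0$ is the linear space parallel to $A$. Applying cancellation to each translate shows that a relative neighbourhood of $\bm y$ lies in $(lt-1)Q$. Hence $\bm y\in((lt-1)Q)^\circ$, and the injection gives \eqref{eq-up-ehrc}.

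One point needs checking here: $\bm y$ must lie in the correct affine hull, meaning the one parallel to $A_0$ through the appropriate dilate. This is automatic, because $Q+\bm y\subset ltQ$ forces $\bm y$ into the direction space of $\operatorname{aff}Q$ up to the scaling of the base point.
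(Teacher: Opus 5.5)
Your proof is correct. For the closed inequalities \eqref{eq-low-ehr} and \eqref{eq-up-ehr} it is the paper's argument, written as explicit injections. The paper chains $\ehr(P,Q;mt)=\ehr(mP,Q;t)$, monotonicity for the container, and $\ehr(Q,Q;s)=\ehr(Q;s-1)$ (i.e.\ $sQ-Q=(s-1)Q$ as a Minkowski difference). Your maps $\bm x\mapsto\bm x+t\bm v$ and $\bm a\mapsto\bm a-t\bm w$ are the composites of these steps, and $Q+(s-1)Q=sQ$ together with support-function cancellation give the two inclusions behind that identity.

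The real difference is in the open inequalities, which the paper dismisses with ``proved similarly''. For \eqref{eq-up-ehrc} this is harmless: $\dim P=\dim Q$ gives $(mtP)^\circ\subset(ltQ)^\circ+t\bm w$, and your neighbourhood-plus-cancellation step just spells it out. For \eqref{eq-low-ehrc}, however, the analogous step would need $(ktQ)^\circ+t\bm v\subset(mtP)^\circ$. This fails when $\dim Q<d$ and $kQ+\bm v$ lies in a facet of $mP$, because the two sets are then disjoint. So your case split is necessary, and there your argument is more complete than the paper's.

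The cost is that your case $\dim Q<d$ rests on the leading-term claim of Theorem~\ref{thm-poly-ehr}. That claim is delicate for rational $P$ that are not full-dimensional: for $P=[0,1]\times\{1/2\}$ and $Q=\{\bm 0\}$ one has $\ehr(P,Q;t)=0$ for all odd $t$. You can make this step self-contained as follows.
\begin{enumerate}
\item[(a)] Suppose $\ehr^\circ(Q;kt-1)>0$, pick a lattice point $\bm x\in(kt-1)Q$, and let $V$ be the direction space of $\operatorname{aff}P$.
\item[(b)] Each point of $\bm x+t\bm v+(\mathbb Z^n\cap V)$ lying in $K_t:=\bm v/k+(mt-m/k)P^\circ$ is an admissible translation, because $Q+\bm v/k\subset(m/k)P$ and $(m/k)P+(mt-m/k)P^\circ\subset(mtP)^\circ$.
\item[(c)] This coset of a rank-$d$ lattice lies in the same affine space as the $d$-dimensional set $K_t$. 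Hence $K_t$ contains at least $ct^d$ of its points for $t\gg0$, which dominates $\ehr^\circ(Q;kt-1)=O(t^{\dim Q})$.
\end{enumerate}
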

\begin{eg}
If $P = Q$, then equality holds in \eqref{eq-low-ehr}--\eqref{eq-up-ehrc} with $k = l = m = 1$ (see \eqref{eq-pp-ehr} and \eqref{eq-pp-ehrc}).
\end{eg}
\begin{eg}
Let $P = \square _2$ and $Q = \Delta _2$. 
Then $\ehr (P,Q;t) = t^2$ as stated in Example~\ref{eg-cb-cb}. 
By $Q \subset P \subset 2Q$, \eqref{eq-low-ehr}, and \eqref{eq-up-ehr}, we have $\ehr (Q;t-1) \leq \ehr (P,Q;t) \leq \ehr (Q;2t-1)$. 
Since 
\[\ehr (Q;t) = \sum_{i = 1}^{t+1}i = \frac{(t+1)(t+2)}{2},\] 
we have 
\[\frac{t(t+1)}{2} \leq \ehr (P,Q;t) \leq t(2t+1) \quad (t \geq 1).\] 
\end{eg}
\subsection{Reciprocity Law and Shifted Duality}
In classical Ehrhart theory, the following theorem plays an important role. 
\begin{thm}[{Ehrhart--Macdonald reciprocity law, \cite[Theorem~4.1]{BR15}}]\label{thm-recip-em}
We have 
\[\ehr (P;-t) = (-1)^{d}\ehr^\circ (P;t).\]
\end{thm}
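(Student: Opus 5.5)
The statement is the classical Ehrhart--Macdonald reciprocity law, cited from \cite[Theorem~4.1]{BR15}; here $P$ is rational of dimension $d$ and the identity is meant between quasi-polynomials. I would prove it by the standard generating-function route.

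\emph{Step 1: reduction to simplices.} First I triangulate $P$ into rational simplices without new vertices, and write $P$ as a disjoint union of the relative interiors of the faces of the triangulation. Then $\ehr(P;t)=\sum_{F}\ehr^\circ(F;t)$, summed over all faces $F$ of the triangulation. Similarly, $\ehr^\circ(P;t)$ is the sum over those faces $F$ not contained in $\partial P$. Since $\ehr(F;-t)=(-1)^{\dim F}\ehr^\circ(F;t)$ holds for each simplex $F$ once the simplex case is known, both sides become signed sums over the face poset. Combining these, reciprocity for $P$ reduces to an Euler-characteristic identity: for each face $G$ of the triangulation,
\[\sum_{F\supseteq G}(-1)^{\dim F}=(-1)^d\quad\text{if } G\not\subset\partial P,\qquad \sum_{F\supseteq G}(-1)^{\dim F}=0\quad\text{otherwise}.\]
This identity is the Euler characteristic of the link of $G$, which is a sphere or a ball.

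\emph{Step 2: the simplex case.} For a rational simplex $\Delta$, I cone over $\Delta\times\{1\}$ to get a simplicial cone $K\subset\mathbb R^{n+1}$ generated by integral vectors $\bm w_0,\dots,\bm w_d$. Tiling $K$ by translates of the half-open fundamental parallelepiped $\Pi$ gives the integer-point transform
\[\sigma_K(\bm z)=\frac{\sigma_\Pi(\bm z)}{\prod_{i}(1-\bm z^{\bm w_i})}.\]
The interior of $K$ is tiled by the opposite half-open parallelepiped $\Pi'$, and $\Pi'$ is obtained from $\Pi$ by the involution $\bm x\mapsto \sum_i\bm w_i-\bm x$. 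This yields Stanley's reciprocity
\[\sigma_{K^\circ}(\bm z)=(-1)^{d+1}\sigma_K(1/\bm z).\]
Specializing to the last coordinate gives $\sum_{t\ge1}\ehr^\circ(\Delta;t)x^t=(-1)^{d+1}\sum_{t\ge0}\ehr(\Delta;t)x^{-t}$ as rational functions. I then apply Popoviciu's lemma: for a quasi-polynomial $f$, the rational function $\sum_{t\ge1}f(-t)x^t$ equals $-\sum_{t\ge0}f(t)x^{-t}$. With this the simplex identity follows.

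\emph{Main obstacle.} The delicate step is the half-open parallelepiped bookkeeping in Step 2. One has to check that the involution really maps $\Pi$ onto the set of lattice points of the interior tiling, and this needs care with which facets are included. The combinatorial Euler-characteristic identity in Step 1 is standard but also requires care at faces lying on $\partial P$.
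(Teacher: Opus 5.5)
The paper does not prove this statement; it imports it from \cite[Theorem~4.1]{BR15}, so the relevant comparison is with the Beck--Robins argument behind the citation. Your sketch is correct and shares its core with that argument: Stanley reciprocity for a simplicial cone via the half-open parallelepiped, specialization to the height variable, and the lemma $\sum_{t\ge1}f(-t)x^t=-\sum_{t\ge0}f(t)x^{-t}$.

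Where you differ is in passing from simplices to a general $P$. Beck--Robins cone over all of $P$ at once. They prove Stanley reciprocity for arbitrary rational cones by triangulating the cone and translating by a small generic vector, one way for $K$ and the opposite way for $K^\circ$. The lattice points are then partitioned among shifted simplicial cones whose boundaries contain no lattice points, so no inclusion--exclusion is needed. You instead prove reciprocity simplex by simplex and glue with the face-poset identity for $\sum_{F\supseteq G}(-1)^{\dim F}$; the values $(-1)^d$ and $0$ you state are right. Your route costs a topological input: links in a triangulation of a convex polytope are spheres or balls. Equivalently, the sum is the compactly supported Euler characteristic of the tangent cone of $P$ at a point of $G^\circ$. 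Their route needs only the generic-shift lemma.

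Two small remarks. First, $\ehr(P;t)=\sum_F\ehr^\circ(F;t)$ holds only for $t\ge1$; at $t=0$ the right side counts the faces. So before substituting $-t$, say explicitly that each $\ehr^\circ(F;\cdot)$ is replaced by its quasi-polynomial extension $t\mapsto(-1)^{\dim F}\ehr(F;-t)$. Second, the step you flag as the main obstacle is immediate. The map $\lambda_i\mapsto 1-\lambda_i$ carries $[0,1)$ onto $(0,1]$, and the involution $\bm x\mapsto\sum_i\bm w_i-\bm x$ preserves $\mathbb Z^{n+1}$ because the $\bm w_i$ are integral.
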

As a direct consequence of this theorem, we obtain the following theorem.
\begin{thm}[Reciprocity law under the condition $\dim Q = 0$]\label{thm-recip-dim0}
If $\dim Q = 0$, then 
\begin{align*} 
\ehr (P,Q;-t) &= (-1)^{d}\ehr^\circ (P,Q;t), 
\intertext{namely,} 
\ehr^\circ (P,Q;t) &= (-1)^{d}\ehr (P,Q;-t). 
\end{align*} 
\end{thm}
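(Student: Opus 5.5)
If $\dim Q = 0$, then $Q$ is a single point $\{\bm q\}$ with $\bm q \in \mathbb Q^n$, since $Q$ is a rational polytope. I will first reduce the two relative counts to classical lattice-point counts, and then apply Theorem~\ref{thm-recip-em}.

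\textbf{Reduction for $\ehr$.} A translate $\{\bm q\}+\bm a$ with $\bm a \in \mathbb Z^n$ lies in $tP$ exactly when $\bm a \in (tP-\bm q)\cap\mathbb Z^n$. Hence
\[\ehr (P,Q;t) = \#\bigl((tP-\bm q)\cap\mathbb Z^n\bigr).\]

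\textbf{Reduction for $\ehr^\circ$.} Replacing $tP$ by $(tP)^\circ = tP^\circ$, the same argument gives
\[\ehr^\circ (P,Q;t) = \#\bigl((tP^\circ-\bm q)\cap\mathbb Z^n\bigr).\]

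\textbf{Case $\bm q$ a lattice point.} The translation by $-\bm q$ is a lattice-preserving bijection. So the two functions equal $\ehr (P;t)$ and $\ehr^\circ (P;t)$, as in the example following the single-variable definition. Theorem~\ref{thm-recip-em} then gives the claim directly, read as an identity of (quasi-)polynomials in the sense of the remark on eventual (quasi-)polynomials.

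\textbf{Case $\bm q$ non-integral.} Here $\#((tP-\bm q)\cap\mathbb Z^n)$ is a translated Ehrhart count, and I would handle it by homogenization.

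\begin{enumerate}
\item Choose an integer $N>0$ with $N\bm q \in \mathbb Z^n$.
\item Form the rational cone over $P\times\{1\}$ in $\mathbb R^{n+1}$. Count lattice points $(\bm x,t)$ in the cone, subject to the congruence $\bm x+\bm q\cdot$ (appropriately scaled) integral.
\item Equivalently, use Stanley's reciprocity for rational cones and their shifted versions. This says that for rational $P$ and rational $\bm q$, the function $t\mapsto\#((tP-\bm q)\cap\mathbb Z^n)$ agrees, for $t$ large, with a quasi-polynomial $L(t)$. Moreover, $L(-t)=(-1)^d\#((tP^\circ+\bm q)\cap\mathbb Z^n)$.
\item Note the sign flip of $\bm q$ in the right-hand side. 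Since $-t\,P$ and the shift by $-\bm q$ reflect to $tP$ and the shift by $+\bm q$, this is exactly what the substitution $t\mapsto -t$ of a translated count produces.
\end{enumerate}

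This gives $(-1)^d\#((tP^\circ+\bm q)\cap\mathbb Z^n)$, whereas the statement needs the shift $-\bm q$. So the non-integral case is not literally a corollary of Theorem~\ref{thm-recip-em} as stated.

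\textbf{Main obstacle.} The remaining work is to reconcile this sign mismatch. I would try two things, in order.

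\begin{enumerate}
\item Read the paper's convention for $\dim Q = 0$ as the one in its example, where $Q$ is a lattice point. Then the non-integral case is vacuous.
\item Otherwise, check whether the claim requires $\bm q \in \mathbb Z^n$, possibly after the reduction of Theorem~\ref{thm-poly-ehr}. If the paper intends general rational $\bm q$, test a one-dimensional example such as $P=[0,1]$, $\bm q=1/3$. The statement might genuinely fail there, in which case the theorem should be restricted to lattice points $\bm q$.
\end{enumerate}

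My plan is therefore to present the proof for $\bm q\in\mathbb Z^n$ via the lattice-translation bijection and Theorem~\ref{thm-recip-em}, and to flag the rational-point case as the delicate step.
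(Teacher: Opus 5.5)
Your lattice-point case is exactly the paper's argument. The paper gives no proof beyond calling the statement a direct consequence of Theorem~\ref{thm-recip-em}. That tacitly uses $\ehr(P,\{\bm q\};t)=\ehr(P;t)$ and $\ehr^\circ(P,\{\bm q\};t)=\ehr^\circ(P;t)$, which is valid only for $\bm q\in\mathbb Z^n$, as in its example with a lattice point $\bm x$. Your doubt about non-integral $\bm q$ is well founded. Under the paper's standing hypothesis that $Q$ is merely rational, the statement is false. The formula you obtain from Stanley reciprocity is the correct general law:
\[\ehr(P,\{\bm q\};-t)=(-1)^d\ehr^\circ(P,\{-\bm q\};t).\]
It reduces to the stated identity when $\bm q\equiv-\bm q \pmod{\mathbb Z^n}$, i.e.\ $2\bm q\in\mathbb Z^n$, but not in general.

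However, the test case you propose will not settle the question. For $P=[0,1]$, the reflection $x\mapsto t-x$ preserves $tP^\circ$ and swaps $\mathbb Z+\tfrac13$ with $\mathbb Z-\tfrac13$. So the identity happens to hold there: both sides equal $-t$. The same happens for every integral segment.

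A genuine counterexample is $P=\Delta_2$, $Q=\{(\tfrac13,\tfrac13)\}$.
\begin{enumerate}
\item The point $(a+\tfrac13,b+\tfrac13)$ lies in $tP$ if and only if $a,b\ge 0$ and $a+b\le t-1$.
\item The shifted lattice misses $\partial(tP)$, so the same conditions characterize $tP^\circ$.
\item Hence $\ehr(P,Q;t)=\ehr^\circ(P,Q;t)=t(t+1)/2$ for all $t\ge 1$.
\item Then $\ehr(P,Q;-t)=t(t-1)/2\neq(-1)^2\ehr^\circ(P,Q;t)$.
\item Meanwhile $\#((tP^\circ+\bm q)\cap\mathbb Z^2)=t(t-1)/2$, which confirms your corrected formula.
\end{enumerate}
If you want a one-dimensional counterexample, $P$ must be non-integral. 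Take $P=[0,\tfrac12]$, $Q=\{\tfrac13\}$. Both functions equal $\lceil t/2\rceil$, and the identity fails for every odd $t$.

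So write up the lattice-point proof, and state the theorem with the hypothesis $Q=\{\bm q\}$, $\bm q\in\mathbb Z^n$ (or $2\bm q\in\mathbb Z^n$). Do not leave the rational case as an open delicate step.
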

In this paper, we describe several relations between $\ehr (P,Q;t)$ and $\ehr^\circ (P,Q;t)$ in the case where $\dim Q > 0$. 
We use the following definition.
\begin{df}[Reciprocity law]
If there exists an integer $\rho > 0$ such that 
\begin{align} 
\ehr (P,Q;-t) &= (-1)^{d}\ehr^\circ (P,Q;t+\rho ), \label{eq-rl-ec} 
\intertext{namely,} 
\ehr^\circ (P,Q;t) &= (-1)^{d}\ehr (P,Q;-t+\rho ) \label{eq-rl-ce} 
\end{align} 
for $t \gg 0$, then we call the relations \eqref{eq-rl-ec} and \eqref{eq-rl-ce} the {\itshape reciprocity law} with respect to $P$ and $Q$. 
Furthermore, we call $\rho$ the {\itshape shift number} of the reciprocity law and denote it by $\rl (P,Q)$.
\end{df}
The following theorem is the second main result of this paper (see Section~\ref{subsec-recip} for the proof).
\begin{thm}[Reciprocity law under the inscribed condition]\label{thm-ri-ehr}
If $\dim Q > 0$ and $Q$ is inscribed in $P$, then 
\begin{align} 
\ehr (P,Q;-t) &= (-1)^{d}\ehr^\circ (P,Q;t+2), \label{eq-ri-ehr} 
\intertext{namely,} 
\ehr^\circ (P,Q;t) &= (-1)^{d}\ehr (P,Q;-t+2) \label{eq-ri-ehrc} 
\end{align} 
for $t \gg 0$.
\end{thm}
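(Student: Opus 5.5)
The plan is to show that, under the inscribed hypothesis, both relative Ehrhart functions are classical Ehrhart functions of $P$ shifted by one:
\[
\ehr (P,Q;t) = \ehr (P;t-1), \qquad \ehr^\circ (P,Q;t) = \ehr^\circ (P;t-1) \qquad (t \geq 2).
\]
Theorem~\ref{thm-ri-ehr} then follows from the Ehrhart--Macdonald reciprocity law (Theorem~\ref{thm-recip-em}). Given these identities, the eventual quasi-polynomials $\ehr (P,Q;t)$ and $\ehr^\circ (P,Q;t)$ agree for $t \geq 2$ with the quasi-polynomials $\ehr (P;t-1)$ and $\ehr^\circ (P;t-1)$. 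Hence their quasi-polynomial extensions to all integers coincide, and
\[
\ehr (P,Q;-t) = \ehr (P;-t-1) = (-1)^d\ehr^\circ (P;t+1) = (-1)^d\ehr^\circ (P,Q;t+2).
\]

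To prove the identities, I would describe the set of valid translation vectors $\bm v \in \mathbb Z^n$, namely those with $Q+\bm v \subset tP$. Write $P = \operatorname{aff}(P)\cap\{\bm x : \langle \bm a_j,\bm x\rangle \leq b_j,\ j = 1,\dots,m\}$, with one inequality per facet $G_j$. The goal is to show that, for $t \geq 1$, $Q+\bm v \subset tP$ holds if and only if $\bm v \in (t-1)P$.

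The inclusion "if" is immediate: $Q \subset P$ gives $(t-1)P+Q \subset (t-1)P+P = tP$ by convexity. For "only if", there are two steps. First, pick any $\bm q \in Q$. Then $\bm v+\bm q \in tP \subset t\operatorname{aff}(P)$ and $\bm q \in \operatorname{aff}(P)$, so $\bm v = t\bm p-\bm q$ is an affine combination with coefficients $t$ and $-1$. This places $\bm v$ in $(t-1)\operatorname{aff}(P)$, which is the right affine hull even when $\operatorname{aff}(P)$ does not pass through the origin. Second, the inscribed hypothesis gives $\max_{\bm q\in Q}\langle \bm a_j,\bm q\rangle = b_j$: $Q \subset P$ bounds the maximum by $b_j$, and a vertex of $Q$ on $G_j$ attains it. Therefore $\langle \bm a_j,\bm v\rangle + b_j \leq tb_j$, i.e. $\langle \bm a_j,\bm v\rangle \leq (t-1)b_j$ for every $j$, so $\bm v \in (t-1)P$.

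The interior version runs the same way. $Q+\bm v \subset tP^\circ$ means that $\bm v$ lies in the correct affine hull and that $\langle \bm a_j,\bm v+\bm q\rangle < tb_j$ for all $\bm q \in Q$ and all $j$. Since the maximum over the compact set $Q$ is attained and equals $b_j$, this is equivalent to $\langle \bm a_j,\bm v\rangle < (t-1)b_j$ for all $j$, i.e. $\bm v \in (t-1)P^\circ$. Counting lattice points gives both identities.

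I expect the main obstacle to be bookkeeping rather than an idea. One point is making the affine-hull argument rigorous when $d<n$, including the relative interior in the right affine space. The other is justifying that equality of eventual quasi-polynomials with shifted classical Ehrhart quasi-polynomials licenses evaluation at negative arguments; this uses uniqueness of the quasi-polynomial that agrees with a function for all large $t$. I would also remark where $\dim Q>0$ enters: only through the definition of inscription, since for a point $Q$ the same computation would instead reproduce Theorem~\ref{thm-recip-dim0} with shift $0$ after translating.
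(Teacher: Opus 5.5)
Your proposal is correct and follows the paper's route. The paper first shows, via Propositions~\ref{prop-pqi-ehr} and \ref{prop-pp-ehr}, that inscription gives $\ehr (P,Q;t) = \ehr (P;t-1)$ and $\ehr^\circ (P,Q;t) = \ehr^\circ (P;t-1)$, and then chains these with the Ehrhart--Macdonald reciprocity law exactly as you do. Your direct facet-by-facet bound $\langle \bm a_j,\bm v\rangle \leq (t-1)b_j$, with the affine-hull bookkeeping, is a cleaner form of the paper's argument: the paper instead passes through the Minkowski-difference equality $tP-Q = tP-P$ (and its interior analogue) and checks only one facet inequality per vertex of $P$.
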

Even in the case where $Q$ is not inscribed in $P$, the reciprocity law with $\mathrm{rl}(P,Q)$ not necessarily equal to $2$ may still hold, as shown below.
\begin{eg}\label{eg-2cb-cb}
Let $d > 0$ be an integer. 
In each of the following cases (see Figure~\ref{fig-6}), we have 
\begin{align*} 
\ehr (P,Q;t) &= (2t)^d, \\ 
\ehr^\circ (P,Q;t) & = (2t-2)^d \quad (t \geq 1), 
\end{align*} 
and therefore 
\[\ehr (P,Q;-t) = (-1)^{d}\ehr^\circ (P,Q;t+1),\] 
namely, 
\[\ehr^\circ (P,Q;t) = (-1)^{d}\ehr (P,Q;-t+1).\] 
\begin{enumerate}
\item[(1)]
The case where $P = 2\square _d$ and $Q = \square _d$.
\item[(2)]
The case where $P = 2\square _d$ and $Q = \Delta _d$.
\item[(3)]
The case where $P = 2\square _d$ and $Q = F_d$.
\item[(4)]
The case where $P = 2\square _d$ and $Q = L_d$.
\end{enumerate}
\begin{figure}[h]
\centering
\includegraphics{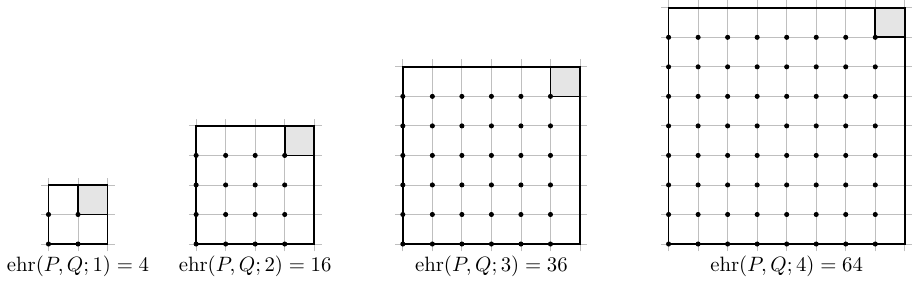}
\caption{The values of $\ehr (P,Q;t)$ for $1 \leq t \leq 4$ in Example~\ref{eg-2cb-cb}(1) in the case where $d = 2$.}\label{fig-6}
\end{figure}
\end{eg}
As shown below, in the case where $Q$ is not inscribed in $P$, the reciprocity law \eqref{eq-rl-ec} does not necessarily hold for any integer $\rho > 0$.
\begin{eg}\label{eg-trig-sq}
In each of the following cases (see Figure~\ref{fig-7}), we have 
\begin{align*} 
\ehr (P,Q;t) &= t(3t-2), \\ 
\ehr^\circ (P,Q;t) & = (t-1)(3t-5) \quad (t \geq 1). 
\end{align*} 
There is no integer $\rho > 0$ such that $\ehr(P,Q;-t) = (-1)^{d}\ehr^\circ (P,Q;t+\rho )$, since there is no integer $\rho > 0$ such that $t(3t+2) = (t+\rho -1)(3t+3\rho -5)$. 
Furthermore, we have $\ehr (P,Q;t) = \ehr^\circ (P,Q;t+1)$.
\begin{enumerate}
\item[(1)]
The case where $P = \triangle OX_3Y_2$ and $Q = \square _2$.
\item[(2)]
The case where $P = \triangle OX_3Y_2$ and $Q = L_2$.
\end{enumerate}
\begin{figure}[h]
\centering
\includegraphics{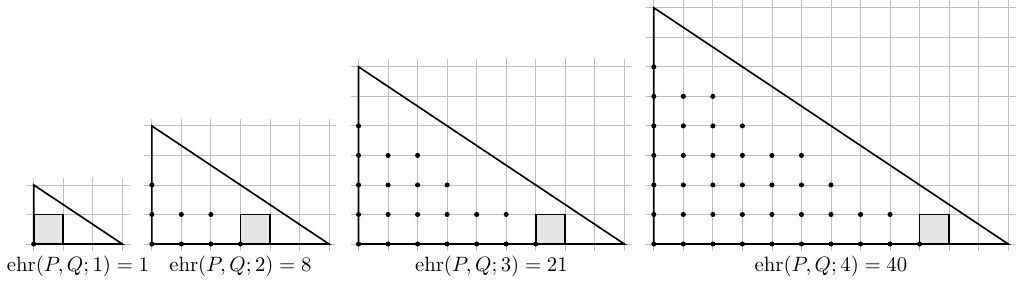}
\caption{The values of $\ehr (P,Q;t)$ for $1 \leq t \leq 4$ in Example~\ref{eg-trig-sq}(1).}\label{fig-7}
\end{figure}
\end{eg}
We also use the following definition.
\begin{df}[Shifted duality]
If there exists an integer $\sigma > 0$ such that 
\begin{align} 
\ehr (P,Q;t) &= \ehr^\circ (P,Q;t+\sigma ), \label{eq-sd-ec} 
\intertext{namely,} 
\ehr^\circ (P,Q;t) &= \ehr (P,Q;t-\sigma ) \label{eq-sd-ce} 
\end{align} 
for $t \gg 0$, then we call the relations \eqref{eq-sd-ec} and \eqref{eq-sd-ce} the {\itshape shifted duality} with respect to $P$ and $Q$. 
Furthermore, we call $\sigma$ the {\itshape shift number} of the shifted duality and denote it by $\sd (P,Q)$.
\end{df}
The following theorem is the third main result of this paper (see Section~\ref{subsec-sd} for the proof). 
Let $\codeg P$ denote the codegree of $P$, which is defined by 
\[\codeg P := \min\{ t \in \mathbb Z_{> 0} \mid tP^\circ\cap\mathbb Z^n \neq \varnothing\}.\] 
\begin{thm}[Sufficient condition for shifted duality]\label{thm-sd-suff}
Suppose that $\dim Q > 0$. 
If $\codeg P = \sigma$ and $\ehr^\circ (P;\sigma ) = 1$, and $Q$ is inscribed in $P$, then \eqref{eq-sd-ec} and \eqref{eq-sd-ce} hold for $t \gg 0$.
\end{thm}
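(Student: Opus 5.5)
The plan is to reduce both relative Ehrhart functions to classical Ehrhart functions of $P$ by a shift, and then prove a classical shifted-duality statement for $P$. The first step works directly from the inscription hypothesis, and I would isolate it as a lemma.

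\textbf{Lemma.} If $\dim Q>0$ and $Q$ is inscribed in $P$, then for $t\gg 0$
\[\ehr(P,Q;t)=\ehr(P;t-1),\qquad \ehr^\circ(P,Q;t)=\ehr^\circ(P;t-1).\]

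To prove it, first reduce to the case $\dim P=n$ (full-dimensional). This is done by working in the affine lattice of $\operatorname{aff}P$: a translate $Q+\bm a$ can lie in $tP$ only if $\bm a$ moves $Q$ within the appropriate translate of the linear span of $P-P$.

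Then write $P=\{\bm x : \langle \bm u_F,\bm x\rangle\le h_F\}$, with one inequality for each facet $F$. The translate $Q+\bm a$ lies in $tP$ if and only if, for every $F$,
\[\langle \bm u_F,\bm a\rangle \le t h_F-\max_{\bm y\in Q}\langle \bm u_F,\bm y\rangle .\]
Since $Q\subset P$ and some vertex of $Q$ lies on $F$, this maximum equals $h_F$. Hence the admissible set of $\bm a$ is exactly $(t-1)P$.

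For the interior version, the inequalities become strict, and the admissible set is $(t-1)P^\circ$. Counting lattice points gives the lemma.

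As a consistency check, the lemma together with Theorem~\ref{thm-recip-em} immediately yields Theorem~\ref{thm-ri-ehr} with shift $2$. This suggests the lemma is the right mechanism.

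By the lemma, \eqref{eq-sd-ec} is equivalent to the classical statement
\[\ehr^\circ(P;s+\sigma)=\ehr(P;s)\qquad (s\gg 0),\]
obtained by putting $s=t-1$. Let $\bm v$ be the unique lattice point of $\sigma P^\circ$.

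For one inclusion, note that for convex $P$ we have $sP+\sigma P^\circ=(s+\sigma)P^\circ$. Therefore
\[(sP\cap\mathbb Z^n)+\bm v\subset (s+\sigma)P^\circ\cap\mathbb Z^n,\]
which gives $\ehr(P;s)\le \ehr^\circ(P;s+\sigma)$.

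For the reverse inclusion, take a lattice point $\bm x\in(s+\sigma)P^\circ$. I would aim to show $\bm x-\bm v\in sP$ facet by facet. Normalize each $\bm u_F$ to be a primitive integral normal. Then $\langle \bm u_F,\bm x\rangle$ is an integer $<(s+\sigma)h_F$, so it suffices to know that
\[\langle \bm u_F,\bm v\rangle\ \ge\ \sigma h_F-1 ,\]
that is, $\bm v$ lies at lattice distance exactly one from every facet of $\sigma P$. (This is the integral case; in the rational case the same inequality must be read using the integer part of $(s+\sigma)h_F$ minus one.)

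This distance-one property is the main obstacle, because it must be extracted from the hypotheses "$\codeg P=\sigma$" and "$\ehr^\circ(P;\sigma)=1$". I would try first to argue by contradiction.

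Suppose $\bm v$ is at lattice distance $\ge 2$ from some facet $F$ of $\sigma P$. Then I would try to produce a second interior lattice point of $\sigma P$, or an interior lattice point of $(\sigma-1)P$. The natural candidate is a lattice point near $\bm v$ obtained by stepping one unit towards $F$ along a lattice direction on which $\langle \bm u_F,\cdot\rangle$ decreases by one.

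If this argument does not close in general, my fallback is a counting argument. By Theorem~\ref{thm-recip-em}, the difference $\ehr^\circ(P;s+\sigma)-\ehr(P;s)$ is a quasi-polynomial in $s$. It is nonnegative by the first inclusion. One would then show that it vanishes by comparing coefficients, using the reciprocity law together with the uniqueness of $\bm v$, in the spirit of the Gorenstein criterion.

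Once the classical identity holds for $s\gg 0$, substituting $s=t-1$ and applying the lemma gives \eqref{eq-sd-ec}. Replacing $t$ by $t-\sigma$ gives \eqref{eq-sd-ce}.
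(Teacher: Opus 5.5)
\textbf{The step you flagged cannot be closed, because the statement itself is false.} Your reduction is the paper's: your lemma is Proposition~\ref{prop-pqi-ehr}, and your first inclusion $(sP\cap\mathbb Z^n)+\bm v\subset (s+\sigma)P^\circ\cap\mathbb Z^n$ is correct. You also located the crux correctly: the reverse inclusion comes down to $\bm v$ lying at lattice distance one from every facet of $\sigma P$, i.e.\ $\sigma P-\bm v$ being reflexive ($P$ Gorenstein of codegree $\sigma$). But $\codeg P=\sigma$ and $\ehr^\circ(P;\sigma)=1$ do not imply this in dimension $3$, so neither your ``step toward $F$'' contradiction nor the coefficient comparison can work. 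A counterexample is
\[
P=Q=\mathrm{conv}\{\bm e_1,\bm e_2,\bm e_3,(-1,-1,-2)\}\subset\mathbb R^3 .
\]
\begin{itemize}
\item Its only lattice points are the four vertices and $\bm 0$, which is interior. Hence $\codeg P=1$ and $\ehr^\circ(P;1)=1$, and $Q=P$ is inscribed in $P$.
\item The facet $\mathrm{conv}\{\bm e_1,\bm e_2,(-1,-1,-2)\}$ has primitive inequality $2x_1+2x_2-3x_3\le 2$, so $\bm 0$ is at lattice distance $2$ from it.
\item Correspondingly, $(0,0,-1)\in 2P^\circ$ but $(0,0,-1)\notin P+\bm 0$, and direct counting gives $\ehr(P;1)=5<6=\ehr^\circ(P;2)$.
\item In general $P$ has normalized volume $5$ and $h^*$-vector $(1,1,2,1)$, which is not symmetric, and $\ehr^\circ(P;s+1)-\ehr(P;s)=\binom{s+1}{2}$.
\end{itemize}
Hence $\ehr^\circ(P,Q;t+1)-\ehr(P,Q;t)=\binom{t}{2}\neq 0$ for every $t\ge 2$, so \eqref{eq-sd-ec} fails with $\sigma=1$.

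The paper's proof passes over the same point. It asserts $((t-1)P+\sigma P^\circ)\cap\mathbb Z^n=((t-1)P+\bm c)\cap\mathbb Z^n$, but only ``$\supset$'' is justified. A lattice point of a Minkowski sum need not be a sum of lattice points of the summands, and $(0,0,-1)$ above is an instance. Your facet-by-facet argument does prove a correct theorem if you replace the hypothesis $\ehr^\circ(P;\sigma)=1$ by: $P$ is a lattice polytope and $\sigma P-\bm v$ is reflexive. For lattice polygons the stated hypotheses happen to force this (for example, a lattice polygon with exactly one interior lattice point is reflexive), which is why two-dimensional checks and the cube and simplex examples do not reveal the problem.
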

The shifted duality holds, for example, for the pairs of polytopes $P$ and $Q$ in Table \ref{tbl-sd}.
\begin{table}[h]
\centering
\begin{tabular}{c|cc|cc|cc} \hline 
\!\!Example\!\! & $P$ & $Q$ & $\ehr (P,Q;t)$ & $\ehr^\circ (P,Q;t)$ & $\rl (P,Q)$ & $\sd (P,Q)$ \\ \hline 
\ref{eg-cb-cb} & $\square _d$ & $\square _d$, $\Delta _d$, $F_d$, or $L_d$ & $t^d$ & $(t-2)^d$ & $2$ & $2$ \\ \hline 
\ref{eg-2cb-cb} & $2\square _d$ & $\square _d$, $\Delta _d$, $F_d$, or $L_d$ & $(2t)^d$ & $(2t-2)^d$ & $1$ & $1$ \\ \hline 
{} & $2\square _d$ & $2\square _d$, $2\Delta _d$, $2F_d$, or $2L_d$ & $(2t-1)^d$ & $(2t-3)^d$ & $2$ & $1$ \\ \hline 
{} & $3\square _d$ & $\square _d$, $\Delta _d$, $F_d$, or $L_d$ & $(3t)^d$ & $(3t-2)^d$ & none & none \\ \hline 
{} & $3\square _d$ & $2\square _d$, $2\Delta _d$, $2F_d$, or $2L_d$ & $(3t-1)^d$ & $(3t-3)^d$ & none & none \\ \hline 
{} & $3\square _d$ & $3\square _d$, $3\Delta _d$, $3F_d$, or $3L_d$ & $(3t-2)^d$ & $(3t-4)^d$ & $2$ & none \\ \hline 
\ref{eg-spx-spx} & $\Delta _d$ & $\Delta _d$ or $F_d$ & $S_d(t)$ & $S_d(t-d-1)$ & $2$ & $d+1$ \\ \hline 
{} & $2\Delta _d$ & $\Delta _d$ or $F_d$ & $S_d(2t)$ & $S_d(2t-d-1)$ & none & none \\ \hline 
{} & $2\Delta _d$ & $2\Delta _d$ or $2F_d$ & $S_d(2t-1)$ & $S_d(2t-d-2)$ & $2$ & none \\ \hline 
{} & $(d+1)\Delta _d$ & $(d+1)\Delta _d$ or $(d+1)F_d$ & $S_d((d+1)t-d)$ & \!\!$S_d((d+1)t-2d-1)$\!\! & $2$ & $1$ \\ \hline 
\ref{eg-spx-cb} & $2\Delta _d$ & $\square _d$ or $L_d$ & $S_d(2t-1)$ & $S_d(2t-2d-1)$ & $2$ & $d$ \\ \hline 
\ref{eg-trig-sq} & $\triangle OX_3Y_2$ & $\square _2$ or $L_2$ & $t(3t-2)$ & $(t-1)(3t-5)$ & none & $1$ \\ \hline 
\end{tabular}
\caption{Examples of shift numbers.}\label{tbl-sd}
\end{table}\par
Note that in the case where both the reciprocity law and the shifted duality hold, combining their relations yields a self-duality, i.e., a functional equation for each of $\ehr (P,Q;t)$ and $\ehr^\circ (P,Q;t)$. 
For example, we have 
\begin{align*} 
\ehr (\square _d,\square _d;t) &= (-1)^{d}\ehr (\square _d,\square _d;-t), \\ 
\ehr^\circ (\square _d,\square _d;t) &= (-1)^{d}\ehr^\circ (\square _d,\square _d;-t+4), \\ 
\ehr (2\square _d,\square _d;t) &= (-1)^{d}\ehr (2\square _d,\square _d;-t), \\ 
\ehr^\circ (2\square _d,\square _d;t) &= (-1)^{d}\ehr^\circ (2\square _d,\square _d;-t+2), \\ 
\ehr (\Delta _d,\Delta _d;t) &= (-1)^{d}\ehr (\Delta _d,\Delta _d;-t-d+1), \\ 
\ehr^\circ (\Delta _d,\Delta _d;t) &= (-1)^{d}\ehr^\circ (\Delta _d,\Delta _d;-t+d+3). 
\end{align*} 
\section{Fundamental Properties}\label{sec-prop}
\subsection{Invariance}\label{subsec-inv}
In this subsection, we prove the following propositions.
\begin{prop}\label{prop-equiv-ehr}
Both $\ehr (P,Q;t)$ and $\ehr^\circ (P,Q;t)$ are invariant under lattice equivalence.
\end{prop}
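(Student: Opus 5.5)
We need to show that for a lattice equivalence $\phi$, i.e.\ an affine map $\phi(\bm x) = U\bm x+\bm b$ with $U \in GL_n(\mathbb Z)$ and $\bm b \in \mathbb Z^n$, we have $\ehr(\phi(P),\phi(Q);t) = \ehr(P,Q;t)$, and likewise for $\ehr^\circ$. A lattice equivalence may also be taken between affine lattices of the same rank; this reduces to the ambient case after choosing lattice bases.

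The plan is to build an explicit bijection between the sets of valid translation vectors. By definition, $\ehr(P,Q;t) = \#\{\bm a \in \mathbb Z^n \mid Q+\bm a \subset tP\}$ and $\ehr^\circ(P,Q;t) = \#\{\bm a \in \mathbb Z^n \mid Q+\bm a \subset tP^\circ\}$. Here $t_2 = 1$, and we use $(tP)^\circ = tP^\circ$ for $t>0$.

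First we compute how the objects transform. For $t \in \mathbb Z_{>0}$,
\[ t\,\phi(P) = U(tP)+t\bm b, \qquad \phi(Q)+\bm a' = U Q + \bm b + \bm a'. \]
Thus $\phi(Q)+\bm a' \subset t\phi(P)$ if and only if $UQ + \bm b+\bm a' - t\bm b \subset U(tP)$. Applying $U^{-1}$, this is equivalent to $Q + U^{-1}(\bm a' + (1-t)\bm b) \subset tP$.

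We therefore define
\[ \bm a' \mapsto \bm a := U^{-1}(\bm a'+(1-t)\bm b). \]
This is a bijection $\mathbb Z^n \to \mathbb Z^n$, because $U^{\pm1}$ are integer matrices and $(1-t)\bm b \in \mathbb Z^n$. By the equivalence above, it carries the set of valid translations for $(\phi(P),\phi(Q))$ exactly onto the set for $(P,Q)$. This gives the equality for $\ehr$.

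For $\ehr^\circ$, the same computation applies once we check that relative interiors are preserved: $\phi$ is an affine homeomorphism, so it maps affine hulls to affine hulls and relative interiors to relative interiors. Hence $(t\phi(P))^\circ = U(tP^\circ)+t\bm b$, and the same bijection works.

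The only point requiring care is the translation offset $(1-t)\bm b$. Dilation scales the shift $\bm b$ of the container by $t$, but the shift of $Q$ is not scaled. Integrality of $\bm b$ is exactly what keeps the corrected translation vector in $\mathbb Z^n$. For rational polytopes this argument is unchanged, since only the integrality of $U$ and $\bm b$ is used.
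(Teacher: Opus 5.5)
Your proof is correct and is essentially the paper's argument: your set of valid translation vectors is exactly the lattice-point set of the Minkowski difference $tP-Q$, and the paper likewise shows that the lattice isomorphism carries $tP-Q$ bijectively onto $t\varphi(P)-\varphi(Q)$, and similarly $tP^\circ-Q$ onto $t\varphi(P)^\circ-\varphi(Q)$. The only difference is that you treat affine maps $\bm x\mapsto U\bm x+\bm b$ in one step, correctly tracking the offset $(t-1)\bm b$, whereas the paper's identity $t\varphi(P)=\varphi(tP)$ presumes $\varphi$ is linear and handles integer translations of $P$ and $Q$ separately in Proposition~\ref{prop-par-ehr}.
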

\begin{prop}\label{prop-par-ehr}
Let $\bm v$, $\bm w \in \mathbb Z^n$. 
Then 
\begin{align} 
\ehr (P+\bm v,Q+\bm w;t) &= \ehr (P,Q;t), \label{eq-pvqw-ehr} \\ 
\ehr^\circ (P+\bm v,Q+\bm w;t) &= \ehr^\circ (P,Q;t). \label{eq-pvqw-ehrc} 
\end{align} 
\end{prop}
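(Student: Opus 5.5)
The plan is to give an explicit bijection between the sets being counted. For each fixed $t\in\mathbb Z_{>0}$, the quantity $\ehr(P,Q;t)$ is the cardinality of the set
\[A(P,Q;t) := \{\bm a\in\mathbb Z^n \mid Q+\bm a\subset tP\},\]
and $\ehr^\circ(P,Q;t)$ is the cardinality of
\[A^\circ(P,Q;t) := \{\bm a\in\mathbb Z^n \mid Q+\bm a\subset (tP)^\circ\}.\]
This holds by the definition with $r=2$, $t_2=1$: a $2$-layered matryoshka is determined by the translation vector $\bm a_1$, and the count is $0$ exactly when these sets are empty. It therefore suffices to exhibit, for each $t$, a bijection $A(P+\bm v,Q+\bm w;t)\to A(P,Q;t)$, and likewise for the interior versions.

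The key computation uses $t(P+\bm v)=tP+t\bm v$ and, for the relative interior, $(tP+t\bm v)^\circ=(tP)^\circ+t\bm v$, since translation is an affine isomorphism and so preserves affine hulls and relative interiors. For $\bm a\in\mathbb Z^n$ we then have
\[Q+\bm w+\bm a\subset tP+t\bm v \iff Q+(\bm a+\bm w-t\bm v)\subset tP,\]
and the same equivalence holds with $tP$ replaced by $(tP)^\circ$ on both sides. Hence the map $\bm a\mapsto \bm a+\bm w-t\bm v$ sends $A(P+\bm v,Q+\bm w;t)$ into $A(P,Q;t)$. It preserves $\mathbb Z^n$ because $\bm v,\bm w\in\mathbb Z^n$ and $t\in\mathbb Z$, and its inverse $\bm b\mapsto \bm b-\bm w+t\bm v$ is also integral. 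So it is a bijection, and it restricts to a bijection of the interior sets as well. Taking cardinalities gives \eqref{eq-pvqw-ehr} and \eqref{eq-pvqw-ehrc}, including the case where both sides are $0$.

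There is no real obstacle. The only points needing care are that the shift of $P$ gets dilated to $t\bm v$, which is why integrality of $t$ is used, and that relative interior commutes with translation, which makes the open case go through with the same map.
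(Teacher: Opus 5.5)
Your proof is correct and is essentially the paper's argument. The paper notes that the Minkowski difference $t(P+\bm v)-(Q+\bm w)=(tP-Q)+t\bm v-\bm w$ is an integral translate of $tP-Q$ and so has the same number of lattice points; your bijection $\bm a\mapsto \bm a+\bm w-t\bm v$ is that translation written on translation vectors, and your checks on the relative interior and on the integrality of $t\bm v$ fill in what the paper leaves as ``proved similarly.''
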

We use the representation of $\ehr (P,Q;t)$ and $\ehr^\circ (P,Q;t)$ in terms of the Minkowski differences as 
\begin{align} 
\ehr (P,Q;t) &= \#\{\bm a \in \mathbb Z^n \mid tP \supset Q+\bm a\} = \# (tP-Q)\cap\mathbb Z^n, \label{eq-mind-ehr} \\ 
\ehr^\circ (P,Q;t) &= \#\{\bm a \in \mathbb Z^n \mid tP^\circ \supset Q+\bm a\} = \# (tP^\circ -Q)\cap\mathbb Z^n. \label{eq-mind-ehrc} 
\end{align} 
\begin{proof}[Proof of Proposition~\ref{prop-equiv-ehr}]
Let $\varphi :\mathbb R^n\to\mathbb R^n$ be a lattice isomorphism. 
Then 
\[\ehr (\varphi (P),\varphi (Q);t) = \ehr (P,Q;t)\] 
holds by \eqref{eq-mind-ehr}, since 
\[ t\cdot\varphi (P)-\varphi (Q) = \varphi (tP)-\varphi (Q) = \varphi (tP-Q)\] 
and the lattice points of $tP-Q$ correspond one-to-one with those of $\varphi (tP-Q)$ under $\varphi$. 
Similarly, 
\[\ehr^\circ (\varphi (P),\varphi (Q);t) = \ehr^\circ (P,Q;t)\] 
holds by \eqref{eq-mind-ehrc}, since 
\[ t\cdot\varphi (P)^\circ -\varphi (Q) = \varphi (tP^\circ )-\varphi (Q) = \varphi (tP^\circ -Q). \qedhere\] 
\end{proof}
\begin{proof}[Proof of Proposition~\ref{prop-par-ehr}]
Equation~\eqref{eq-pvqw-ehr} holds, since the number of lattice points in 
\[ t(P+\bm v)-(Q+\bm w) = (tP-Q)+t\bm v-\bm w\] 
is equal to that in $tP-Q$. 
Equation~\eqref{eq-pvqw-ehrc} is proved similarly.
\end{proof}
\subsection{Functional Equations}\label{subsec-fe}
In this subsection, we prove the following propositions and Theorem~\ref{thm-kplq-ehr}.
\begin{prop}\label{prop-pp-ehr}
We have 
\begin{align} 
\ehr (P,P;t) &= \ehr (P;t-1), \label{eq-pp-ehr} \\ 
\ehr^\circ (P,P;t) &= \ehr^\circ (P;t-1). \label{eq-pp-ehrc} 
\end{align} 
\end{prop}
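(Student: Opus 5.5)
We prove both identities using the Minkowski-difference descriptions \eqref{eq-mind-ehr} and \eqref{eq-mind-ehrc}. The key geometric fact is that for a convex set $P$ and real $t\ge 1$,
\[ tP - P = (t-1)P, \]
where the Minkowski difference is $X - Y := \{\bm a \mid Y+\bm a\subset X\}$. The same computation in the relative interior gives $tP^\circ - P = (t-1)P^\circ$. Once these hold, counting lattice points gives $\ehr(P,P;t)=\#((t-1)P\cap\mathbb Z^n)=\ehr(P;t-1)$ and $\ehr^\circ(P,P;t)=\ehr^\circ(P;t-1)$.

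\textbf{Step 1: $tP-P=(t-1)P$.} For the inclusion $\supset$: if $\bm a\in(t-1)P$, then $P+\bm a\subset P+(t-1)P=tP$, since $P+(t-1)P = tP$ by convexity when $t-1\ge 0$. For the inclusion $\subset$, I would use a support-function argument. If $P+\bm a\subset tP$, then for every linear functional $u$ we have $h_P(u)+\langle u,\bm a\rangle\le t\,h_P(u)$, that is, $\langle u,\bm a\rangle\le (t-1)h_P(u)$. Hence $\bm a\in(t-1)P$.

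Since $P$ may not be full-dimensional, one point needs care. The containment $P+\bm a\subset tP$ already forces $\bm a$ to be parallel to $\operatorname{aff}P$ after translating so that $\bm 0\in\operatorname{aff}P$. So it is cleanest to use Proposition~\ref{prop-par-ehr} to assume $\bm 0\in\operatorname{aff}P$ and work inside $\operatorname{aff}P$, where $P$ is full-dimensional. A lattice translation is allowed here, so the Ehrhart counts are unaffected. Alternatively, the support-function inequality over all $u\in\mathbb R^n$ handles this directly, since $(t-1)P$ is closed and convex and equals the intersection of its supporting halfspaces.

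\textbf{Step 2: $tP^\circ-P=(t-1)P^\circ$.} For $\supset$: $P^\circ$ is relatively open and convex, and $(t-1)P^\circ+P\subset tP^\circ$ holds for $t>1$. For $\subset$: apply the facet description inside $\operatorname{aff}P$. Write $P=\{\bm x\in\operatorname{aff}P: \langle u_j,\bm x\rangle\le b_j\}$, so that $tP^\circ$ is given by the strict inequalities $\langle u_j,\bm x\rangle<tb_j$. Choosing $\bm x\in P$ maximizing $\langle u_j,\cdot\rangle$, so that $\langle u_j,\bm x\rangle=b_j$, the condition $P+\bm a\subset tP^\circ$ gives $\langle u_j,\bm a\rangle<(t-1)b_j$ for every facet $j$. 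Therefore $\bm a\in(t-1)P^\circ$.

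\textbf{Main obstacle.} The expected difficulty is the boundary and degenerate bookkeeping rather than any deep idea. Three points need attention:
\begin{itemize}
\item The interior version requires $t>1$; at $t=1$ both sides of \eqref{eq-pp-ehrc} are $0$, consistent with the paper's convention that $\ehr^\circ(P;0)=0$ for $d>0$.
\item The case $t=1$ in \eqref{eq-pp-ehr} gives $\ehr(P;0)=1$.
\item When $d=0$, $P^\circ=P$ is a point. This case must be checked separately against whatever convention for $\ehr^\circ(P;0)$ is in force.
\end{itemize}
The reduction to $\operatorname{aff}P$ when $d<n$ must also be handled, as described in Step 1.
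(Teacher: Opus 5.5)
Your proof is correct and is essentially the paper's: the paper gets $\ehr(P,P;t)=\#((t-1)P\cap\mathbb Z^n)$ and its interior analogue by setting $Q=P$ in \eqref{eq-mins-ehr}--\eqref{eq-mins-ehrc} with $P-P=\{\bm 0\}$, which comes down to exactly the identities $tP-P=(t-1)P$ and $tP^\circ-P=(t-1)P^\circ$ that you prove directly via support functions and facet inequalities. This is a worthwhile addition, since the paper states those representations without proof, and for general $Q$ only the inclusion $tP-Q\supset(t-1)P+(P-Q)$ is automatic.

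There are two small corrections. First, the paper has no convention making $\ehr^\circ(P;0)=0$; taken literally, $\ehr^\circ(P;0)=\#(0\cdot P^\circ\cap\mathbb Z^n)=1$, so for $d>0$ identity \eqref{eq-pp-ehrc} fails at $t=1$ and is only meant for $t\ge 2$, per the paper's standing remark that formulas are asserted in the range of polynomial behavior. Second, a lattice translation cannot always bring $\bm 0$ into $\operatorname{aff}P$ for rational $P$ (e.g.\ $P=\{1/2\}\times[0,1]$), but your support-function argument, or a real translation (under which $tP-P=(t-1)P$ is invariant), makes that reduction unnecessary.
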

\begin{prop}\label{prop-kpq-ehr}
Let $k \in \mathbb Z_{> 0}$. 
Then 
\begin{align} 
\ehr (kP,Q;t) &= \ehr (P,Q;kt), \label{eq-kpq-ehr} \\ 
\ehr^\circ (kP,Q;t) &= \ehr^\circ (P,Q;kt). \label{eq-kpq-ehrc} 
\end{align} 
Also, if $\dim Q > 0$, then 
\begin{align} 
\ehr (kP,kQ;t) &= \ehr (P,Q;kt-k+1), \label{eq-kpkq-ehr} \\ 
\ehr^\circ (kP,kQ;t) &= \ehr^\circ (P,Q;kt-k+1). \label{eq-kpkq-ehrc} 
\end{align} 
\end{prop}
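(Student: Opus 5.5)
The plan is to work entirely with the Minkowski-difference descriptions \eqref{eq-mind-ehr} and \eqref{eq-mind-ehrc}, so that each identity becomes an equality of two lattice-point sets.

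\emph{First pair.} For $k\in\mathbb Z_{>0}$ we have $t(kP)=(kt)P$ and $t(kP)^\circ=(kt)P^\circ$. Hence
\[t(kP)-Q=(kt)P-Q \quad\text{and}\quad t(kP)^\circ-Q=(kt)P^\circ-Q.\]
Counting lattice points on both sides gives \eqref{eq-kpq-ehr} and \eqref{eq-kpq-ehrc} immediately. No hypothesis on $Q$ is needed.

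\emph{Second pair: reduction to facet offsets.} Write $P=\{\bm x\mid \langle \bm u_j,\bm x\rangle\le b_j\ (j=1,\dots,m)\}$, with one inequality for each facet. Let $h_Q(\bm u)=\max_{\bm y\in Q}\langle\bm u,\bm y\rangle$ be the support function of $Q$. For $s>0$ the set $sP-Q=\{\bm a\mid sP\supset Q+\bm a\}$ is the polytope
\[\{\bm a\mid \langle\bm u_j,\bm a\rangle\le sb_j-h_Q(\bm u_j)\ \forall j\}.\]
If $\dim P<n$, we first restrict to the affine hull of $P$; Proposition~\ref{prop-equiv-ehr} and Proposition~\ref{prop-par-ehr} let us move that hull to a coordinate subspace, and we then add the equations cutting out the hull. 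The interior version is the same with strict inequalities, since $Q$ is compact.

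Now apply this with $s=kt$ and $kQ$, and with $s=kt-k+1$ and $Q$. Using $h_{kQ}=kh_Q$, the two sets in \eqref{eq-kpkq-ehr} have the same facet normals, with offsets
\[ktb_j-kh_Q(\bm u_j) \quad\text{and}\quad (kt-k+1)b_j-h_Q(\bm u_j).\]
Their difference is $(k-1)\bigl(b_j-h_Q(\bm u_j)\bigr)$. So the two sets coincide exactly when $h_Q(\bm u_j)=b_j$ for every facet $j$. This is the statement that every facet of $P$ contains a point of $Q$, and hence a vertex of $Q$. In other words, it is the inscription condition.

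\emph{Second pair: the main obstacle.} The crux is therefore establishing $h_Q(\bm u_j)=b_j$ for all $j$. This needs $Q\subset P$ touching every facet, i.e.\ $Q$ inscribed in $P$, which is precisely the hypothesis later used in Theorem~\ref{thm-kplq-ehr}. Under that condition both sets equal $k(t-1)P$ after the translation normalization, and \eqref{eq-kpkq-ehr} and \eqref{eq-kpkq-ehrc} follow. The role of $\dim Q>0$ is to make inscription meaningful and to exclude the point case, where the shift would be $kt$ rather than $kt-k+1$.

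Without touching, the offset discrepancy $(k-1)(b_j-h_Q(\bm u_j))$ is nonzero and the identity can fail. For example, take $P=[0,2]$, $Q=[0,1]$ and $k=2$: then $\ehr(2P,2Q;t)=4t-1$, while $\ehr(P,Q;2t-1)=4t-2$. So the proposition as worded holds for the second pair only under the inscribed normalization, and the proof would record that explicitly.
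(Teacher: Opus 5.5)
Your proof of \eqref{eq-kpq-ehr} and \eqref{eq-kpq-ehrc} is the paper's argument ($t(kP)=(kt)P$). For the second pair you take a different route: you compare the two Minkowski differences facet by facet through support functions. This leads you to a different conclusion from the paper, and yours is the correct one.

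Your counterexample checks out. For $P=[0,2]$, $Q=[0,1]$, $k=2$ one gets $4t-1$ versus $4t-2$ in \eqref{eq-kpkq-ehr}, and $4t-3$ versus $4t-4$ in \eqref{eq-kpkq-ehrc}. More generally, the paper's own Example~\ref{eg-cuboid} with $P=[0,a]$, $Q=[0,b]$ gives $\ehr(kP,kQ;t)-\ehr(P,Q;kt-k+1)=(k-1)(a-b)$, and the same difference for $\ehr^\circ$.

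The paper's chain $t(kP)-kQ=(kt-k)P+k(P-Q)=(kt-k+1)P-Q$ breaks at its last equality. In your example the middle term is $[0,4t-4]+2[0,1]=[0,4t-2]$, whereas $(2t-1)[0,2]-[0,1]=[0,4t-3]$. In effect the step treats $k(P-Q)$ as if it were $P-Q$.

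The chain is valid when $Q$ is inscribed in $P$. Then the argument of Proposition~\ref{prop-pqi-ehr} gives $P-Q=\{\bm 0\}$ and $sP-Q=(s-1)P$, so every term equals $k(t-1)P$. The correct version is therefore exactly the case $l=k$ of Theorem~\ref{thm-kplq-ehr}. Your offset computation has the advantage of displaying the defect $(k-1)\bigl(b_j-h_Q(\bm u_j)\bigr)$ that the Minkowski-sum manipulation hides.

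Two refinements.

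First, for the corrected statement you can bypass the $H$-description, and with it the affine-hull reduction. That reduction needs more care than sketched: Proposition~\ref{prop-par-ehr} only permits lattice translations, and the affine hull of a rational $P$ need not contain a lattice point. Instead, write $P^*$ for $P$ or $P^\circ$. Proposition~\ref{prop-pqi-ehr} together with $sP^*-P=(s-1)P^*$ for $s\ge2$ gives, in any dimension and for $t\ge2$,
\[
t(kP^*)-kQ=k(tP^*-Q)=k(t-1)P^*
\quad\text{and}\quad
(kt-k+1)P^*-Q=(kt-k)P^*,
\]
so both sides count lattice points of $k(t-1)P^*$.

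Second, your ``exactly when'' concerns equality of the two sets for $t\gg0$, not equality of their lattice-point counts. So inscription is sufficient but not necessary. For example, $P=[0,1]$, $Q=[0,\tfrac{2}{3}]$, $k=2$ gives $2t-1$ on both sides of \eqref{eq-kpkq-ehr} and $2t-2$ on both sides of \eqref{eq-kpkq-ehrc}, although $Q$ misses the facet $\{1\}$. The right repair is to add inscription as a hypothesis, not to claim the identity holds only in that case.
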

\begin{prop}\label{prop-pqi-ehr}
If $\dim Q > 0$ and $Q$ is inscribed in $P$, then 
\begin{align} 
\ehr (P,Q;t) &= \ehr (P,P;t) = \ehr (P;t-1), \label{eq-pqi-ehr} \\ 
\ehr^\circ (P,Q;t) &= \ehr^\circ (P,P;t) = \ehr^\circ (P;t-1). \label{eq-pqi-ehrc} 
\end{align} 
\end{prop}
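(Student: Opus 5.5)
The statement to prove is Proposition~\ref{prop-pqi-ehr}. Its second equalities, $\ehr (P,P;t) = \ehr (P;t-1)$ and $\ehr^\circ (P,P;t) = \ehr^\circ (P;t-1)$, are exactly Proposition~\ref{prop-pp-ehr}. It therefore suffices to prove $\ehr (P,Q;t) = \ehr (P,P;t)$ and $\ehr^\circ (P,Q;t) = \ehr^\circ (P,P;t)$. Via \eqref{eq-mind-ehr} and \eqref{eq-mind-ehrc}, I will show the set identities
\[ tP-Q = tP-P \quad\text{and}\quad tP^\circ -Q = tP^\circ -P, \]
where $A-B$ denotes the Minkowski difference $\{\bm a \mid B+\bm a \subset A\}$, and then count lattice points.

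The inclusion $tP-P \subset tP-Q$ is immediate from $Q \subset P$: if $P+\bm a \subset tP$, then $Q+\bm a \subset tP$. The same holds with $tP^\circ$ in place of $tP$.

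For the reverse inclusion, write $P = \{\bm x \in \operatorname{aff}P \mid \langle \bm u_F,\bm x\rangle \le h_F\}$, with one inequality for each facet $F$ of $P$. Then $tP = \{\bm x \in t\operatorname{aff}P \mid \langle \bm u_F,\bm x\rangle \le th_F\}$.

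Suppose $Q+\bm a \subset tP$. The inscription hypothesis gives, for each facet $F$, a vertex $\bm q_F$ of $Q$ with $\bm q_F \in F$, so $\langle \bm u_F,\bm q_F\rangle = h_F$. Hence $h_F+\langle \bm u_F,\bm a\rangle \le th_F$ for every $F$. Since $\max_{\bm x\in P}\langle \bm u_F,\bm x\rangle = h_F$, this says every point of $P+\bm a$ satisfies all facet inequalities of $tP$.

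It remains to check the affine hull condition $P+\bm a \subset t\operatorname{aff}P$. The set $Q+\bm a$ lies in $t\operatorname{aff}P$ and contains a point of $\operatorname{aff}P+\bm a$. Because $\dim Q>0$ and $t \geq 1$, a parallel translate of $\operatorname{aff}P$ meeting $t\operatorname{aff}P$ equals it (they have the same direction space), so $P+\bm a \subset t\operatorname{aff}P$. This proves $P+\bm a \subset tP$.

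For the interior version the argument is the same with strict inequalities. If $Q+\bm a \subset tP^\circ$, then $h_F+\langle\bm u_F,\bm a\rangle < th_F$ for each $F$. Hence every point of $P+\bm a$ satisfies all facet inequalities of $tP$ strictly and lies in $t\operatorname{aff}P$, so $P+\bm a \subset tP^\circ$.

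The main obstacle is the bookkeeping when $d<n$: the facet description must be taken relative to $\operatorname{aff}P$, and one must confirm that the translate stays in the right affine hull. I will handle this by first reducing to the case $P \ni \bm 0$ after a rational translation; alternatively, I will simply note that every translate in question lies in $t\operatorname{aff}P$ because $Q+\bm a$ does and the direction spaces agree. The hypothesis $\dim Q>0$ is not really needed in this argument; it is used only to stay consistent with the paper's definition of inscription.

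Counting lattice points in $tP-Q = tP-P$ and in $tP^\circ-Q = tP^\circ-P$ then gives the first equalities, and Proposition~\ref{prop-pp-ehr} gives the rest.
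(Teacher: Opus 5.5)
Your proof is correct and follows the same route as the paper. You prove the Minkowski-difference identity $tP^{*}-Q = tP^{*}-P$ for $P^{*}\in\{P,P^\circ\}$ by using a vertex of $Q$ on each facet of $P$, then count lattice points and apply Proposition~\ref{prop-pp-ehr}. Your facet-by-facet version is tighter than the paper's write-up: you derive $\langle \bm u_F,\bm a\rangle \le (t-1)h_F$ for every facet $F$ and then bound all of $P+\bm a$, whereas the paper checks, for each vertex $\bm v$, only the inequality of a single facet through $\bm v$. Your affine-hull check also covers the case $d<n$, which the paper leaves implicit.
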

To prove them, we use the representation of $\ehr (P,Q;t)$ and $\ehr^\circ (P,Q;t)$ in terms of the Minkowski sums as 
\begin{align} 
\ehr (P,Q;t) &= \# ((t-1)P+(P-Q))\cap\mathbb Z^n, \label{eq-mins-ehr} \\ 
\ehr^\circ (P,Q;t) &= \# ((t-1)P^\circ +(P-Q))\cap\mathbb Z^n. \label{eq-mins-ehrc} 
\end{align} 
\begin{proof}[Proof of Proposition~\ref{prop-pp-ehr}]
Equation~\eqref{eq-pp-ehr} is proved as 
\[\ehr (P,P;t) = \# (t-1)P\cap\mathbb Z^n = \ehr (P;t-1)\] 
by \eqref{eq-mins-ehr}. 
Similarly, Equation~\eqref{eq-pp-ehrc} is proved as 
\[\ehr^\circ (P,P;t) = \# (t-1)P^\circ\cap\mathbb Z^n = \ehr^\circ (P;t-1)\] 
by \eqref{eq-mins-ehrc}.
\end{proof}
\begin{proof}[Proof of Proposition~\ref{prop-kpq-ehr}]
Equation~\eqref{eq-kpq-ehr} follows from 
\[ t(kP) = (kt)P.\] 
Also, Equation~\eqref{eq-kpkq-ehr} follows from 
\[ t(kP)-kQ = (kt)P-kQ = (kt-k)P+k(P-Q) = (kt-k+1)P-Q.\] 
Equations \eqref{eq-kpq-ehrc} and \eqref{eq-kpkq-ehrc} are proved similarly.
\end{proof}
\begin{proof}[Proof of Proposition~\ref{prop-pqi-ehr}]
Let $P^*$ denote either $P$ or $P^\circ$. 
Suppose that $\dim Q > 0$ and $Q$ is inscribed in $P$. 
It suffices to show that the equality 
\[ tP^*-Q = tP^*-P\] 
of Minkowski differences hold.
Let $\bm u \in \mathbb R^n$. 
If $tP^* \supset P+\bm u$, then $tP^* \supset Q+\bm u$, since $P \supset Q$. 
This implies $tP^*-Q \supset tP^*-P$.\par
Conversely, suppose that $tP^* \supset Q+\bm u$. 
Choose an arbitrary vertex $\bm v$ of $P$, a facet $F$ of $P$ containing $\bm v$, and a vertex $\bm w$ of $Q$ lying on $F$. 
Let $\langle\bm a,\bm x\rangle = b$ be the defining equation of $F$, where $\bm a \in \mathbb Q^n$, $b \in \mathbb Q$, and $\langle\bm p,\bm q\rangle$ denotes the standard inner product of vectors $\bm p, \bm q \in \mathbb R^n$. 
Since $\bm w+\bm u \in tP^*$, we have $\langle\bm a,\bm w+\bm u\rangle \lesseqgtr tb$. 
Combined with 
\[\langle\bm a,\bm w+\bm u\rangle = \langle\bm a,\bm w\rangle +\langle\bm a,\bm u\rangle = b+\langle\bm a,\bm u\rangle,\] 
we obtain $\langle\bm a,\bm u\rangle \lesseqgtr (t-1)b$, which implies 
\[\langle\bm a,\bm v+\bm u\rangle = \langle\bm a,\bm v\rangle +\langle\bm a,\bm u\rangle = b+\langle\bm a,\bm u\rangle \lesseqgtr b+(t-1)b = tb.\] 
Therefore, $\bm v+\bm u \in tP^*$. 
Since $\bm v$ was chosen arbitrarily, we obtain $tP^* \supset P+\bm u$. 
Thus, we have $tP^*-Q \subset tP^*-P$. 
This completes the proof.
\end{proof}
We are now ready to prove Theorem~\ref{thm-kplq-ehr}.
\begin{proof}[Proof of Theorem~\ref{thm-kplq-ehr}]
Suppose that $\dim Q > 0$ and $Q$ is inscribed in $P$. 
We prove \eqref{eq-kplq-ehr}. 
Since $kQ$ is inscribed in $kP$, it suffices to show \eqref{eq-kplq-ehr} in the case where $P = Q$ by \eqref{eq-pqi-ehr}. 
The equality 
\[ t(kP)-lP = (kt)P-lP = (kt-l)P\] 
of Minkowski differences yields 
\begin{align*} 
\ehr (kP,lP;t) &= \# (kt-l)P\cap\mathbb Z^n \\ 
&= \ehr (P;kt-l) \\ 
&= \ehr (P,P;kt-l+1) 
\end{align*} 
by \eqref{eq-pp-ehr}. 
Similarly, we have \eqref{eq-kplq-ehrc} by \eqref{eq-pqi-ehrc}.
\end{proof}
\subsection{Product Formulas}\label{subsec-prod}
In this subsection, we prove Theorem~\ref{thm-prod-ehr}. 
\begin{proof}[Proof of Theorem~\ref{thm-prod-ehr}]
By definition, with $\bm a_0 = \bm 0$, \eqref{eq-prod-ehr} is proved as 
\begin{align*} 
\ehr (P_1,\dots,P_r;t_1,\dots,t_r) &= \#\{ (\bm a_1,\dots,\bm a_{r-1}) \in (\mathbb Z^n)^{r-1} \mid t_iP_i+\bm a_{i-1} \supset t_{i+1}P_{i+1}+\bm a_i\} \\ 
&= \#\{ (\bm b_1,\dots,\bm b_{r-1}) \in (\mathbb Z^n)^{r-1} \mid t_iP_i \supset t_{i+1}P_{i+1}+\bm b_i\} \\ 
&= \prod_{i = 1}^{r-1}\#\{\bm b_i \in \mathbb Z^n \mid t_iP_i \supset t_{i+1}P_{i+1}+\bm b_i\} \\ 
&= \prod_{i = 1}^{r-1}\ehr (t_iP_i,t_{i+1}P_{i+1};1) \\ 
&= \prod_{i = 1}^{r-1}\ehr (P_i,t_{i+1}P_{i+1};t_i) \quad (\because\eqref{eq-kpq-ehr}). 
\end{align*} 
Similarly, we have \eqref{eq-prod-ehrc} by \eqref{eq-kpq-ehrc}. 
Furthermore, if $\dim P_r > 0$ and $P_{i+1}$ is inscribed in $P_i$ for each $i \in \{ 1,\dots,r-1\}$, then \eqref{eq-prodi-ehr} and \eqref{eq-prodi-ehrc} follow from Theorem~\ref{thm-kplq-ehr}.
\end{proof}
\section{Eventual Polynomiality}\label{sec-poly}
\subsection{Prismatization of Polytopes}
In the proof of Theorem \ref{thm-poly-ehr}, we use the following proposition.
\begin{prop}\label{prop-prism-ehr}
Let $a$, $b \in \mathbb Z$ with $a \geq b \geq 0$ and $a > 0$. 
Then 
\begin{align} 
\ehr (P\times [0,a],Q\times [0,b];t) &= (at-b+1)\ehr (P,Q;t), \label{eq-prism-ehr} \\ 
\ehr^\circ (P\times [0,a],Q\times [0,b];t) &= (at-b-1)\ehr^\circ (P,Q;t) \quad \left( t \geq \frac{b+1}{a}\right). \label{eq-prism-ehrc} 
\end{align} 
\end{prop}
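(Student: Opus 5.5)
Both identities follow from the Minkowski-difference descriptions \eqref{eq-mind-ehr} and \eqref{eq-mind-ehrc}, because the translation condition for a product splits into independent conditions on the two factors. Write a translation vector in $\mathbb Z^{n+1}$ as $(\bm a,c)$ with $\bm a \in \mathbb Z^n$ and $c \in \mathbb Z$. Then
\[ t(P\times [0,a]) = tP\times [0,at] \supset (Q+\bm a)\times [c,c+b] \]
holds if and only if $tP \supset Q+\bm a$ and $[0,at] \supset [c,c+b]$. This uses that $Q$ is nonempty, so the containment of a product of nonempty sets is equivalent to containment factor by factor. Hence the set of valid translations is the Cartesian product of the valid $\bm a$ and the valid $c$, and its cardinality is a product.

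The interval factor is a direct count. The condition $0 \leq c$ and $c+b \leq at$ gives exactly $at-b+1$ integers $c$, since $at \geq a \geq b$ for $t \geq 1$. This proves \eqref{eq-prism-ehr}.

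For the interior version, the one point needing care is the relative interior of a product. It satisfies
\[ (tP\times [0,at])^\circ = tP^\circ\times (0,at), \]
because $\operatorname{relint}(A\times B) = \operatorname{relint}A\times\operatorname{relint}B$ and the affine hull of a product is the product of the affine hulls. The condition again splits, now into $tP^\circ \supset Q+\bm a$ and $(0,at) \supset [c,c+b]$. The second reads $1 \leq c$ and $c \leq at-b-1$, which gives $at-b-1$ choices, a nonnegative count exactly when $t \geq (b+1)/a$. This explains the hypothesis in \eqref{eq-prism-ehrc}.

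The only potential subtlety is the case $b=0$, where $Q\times\{0\}$ has dimension $\dim Q$ and containment in the open interval is still the strict inequalities above, so nothing changes. I do not expect any real obstacle beyond stating the relative-interior product identity cleanly.
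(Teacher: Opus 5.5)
Your proposal is correct and takes essentially the same route as the paper. The paper likewise writes both counts via the Minkowski differences \eqref{eq-mind-ehr} and \eqref{eq-mind-ehrc}, splits $(tP\times[0,at]) - (Q\times[0,b])$ and its relative-interior analogue factor by factor, and counts $at-b+1$ (resp.\ $at-b-1$) admissible integer shifts in the last coordinate. Your remarks on the nonemptiness of $Q$, the relative interior of a product, and why $t \geq (b+1)/a$ is needed fill in details the paper leaves implicit.
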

\begin{proof}
These equations are shown as 
\begin{align*} 
\ehr (P\times [0,a],Q\times [0,b];t) &= \# (t(P\times [0,a])-Q\times [0,b])\cap\mathbb Z^{n+1} \\ 
&= \# (tP\times [0,at]-Q\times [0,b])\cap\mathbb Z^{n+1} \\ 
&= (at-b+1)\cdot\# (tP-Q)\cap\mathbb Z^n \\ 
&= (at-b+1)\ehr (P,Q;t), \\ 
\ehr^\circ (P\times [0,a],Q\times [0,b];t) &= \# (t(P\times [0,a])^\circ -Q\times [0,b])\cap\mathbb Z^{n+1} \\ 
&= \# ((tP\times [0,at])^\circ -Q\times [0,b])\cap\mathbb Z^{n+1} \\ 
&= (at-b-1)\cdot\# (tP^\circ -Q)\cap\mathbb Z^n \\ 
&= (at-b-1)\ehr^\circ (P,Q;t). \qedhere 
\end{align*} 
\end{proof}
\subsection{Eventual Polynomiality}
In this section, we prove Theorem~\ref{thm-poly-ehr}.
\begin{proof}[Proof of Theorem~\ref{thm-poly-ehr}]
We prove by mathematical induction on $d$ that the statements (1) and (2), and the following statements (3) and (4) hold in the case where $\dim P = d > \dim Q$ or $\dim P = \dim Q = d-1$.
\begin{enumerate}
\item[(3)]
Let $h \in \mathbb Z_{> 0}$. 
Then both $\ehr (h^{-1}P,Q;t)$ and $\ehr^\circ (h^{-1}P,Q;t)$ are eventual quasi-polynomials. 
Furthermore, if $P$ and $Q$ are integral, then both $\ehr (h^{-1}P,Q;t)$ and $\ehr^\circ (h^{-1}P,Q;t)$ are eventual quasi-polynomials with period $h$.
\item[(4)]
If $P$ and $Q$ are integral, and $\dim P = d$, then both $\ehr (P\times [0,1],Q\times\{ 0\};t)$ and $\ehr^\circ (P\times [0,1],Q\times\{ 0\};t)$ are eventual polynomials over $\mathbb Z[1/d!]$.
\end{enumerate}\par
In the case where $d = 1$, since $\dim Q = 0$, statements (1)--(4) hold by the quasi-polynomiality (or polynomiality) of classical Ehrhart functions and the equations 
\begin{align} 
\ehr (P\times [0,1],Q\times\{ 0\};t) &= (t+1)\ehr (P,Q;t), \label{eq-prism1-ehr} \\ 
\ehr^\circ (P\times [0,1],Q\times\{ 0\};t) &= (t-1)\ehr^\circ (P,Q;t) \label{eq-prism1-ehrc} 
\end{align} 
obtained from Proposition~\ref{prop-prism-ehr}.\par
Let $d_0 \in \mathbb Z_{> 0}$. 
Assume that statements (1)--(4) hold for each $d \in \mathbb Z_{> 0}$ with $d < d_0$ in the case where $\dim P = d > \dim Q$ or $\dim P = \dim Q = d-1$. 
Consider the case where $d = d_0$.
\begin{enumerate}
\item[(i)]
The case where $\dim P = d > \dim Q$. 
Let $\ehr^* (P,Q;t)$ denote either $\ehr (P,Q;t)$ or $\ehr^\circ (P,Q;t)$.
\begin{itemize}
\item
Suppose that for some integer $h_0 > 0$, $P$ is a $d$-dimensional pyramid with apex at the origin and base given by a rational (or integral) polytope $P_0$ lying on the hyperplane $x_n = h_0$, and $Q$ lies on $P_0$. 
Then, since the cross section of $tP$ cut by the hyperplane $x_n = s$ is congruent to $(s/h_0)P_0$ for each $s \in \{ 0,1,\dots,h_0t\}$ (see Figure~\ref{fig-8}), we have 
\[\ehr^* (P,Q;t) = \sum_{s = 0}^{h_0t}\ehr^*\left(\frac{s}{h_0}P_0,Q;1\right) = \sum_{s = 1}^{h_0t}\ehr^* (h_0^{-1}P_0,Q;s)+\begin{cases} 
0 & \text{if }\dim Q > 0, \\ 
1 & \text{if }\dim Q = 0.
\end{cases}\] 
\begin{figure}[h]
\centering
\includegraphics{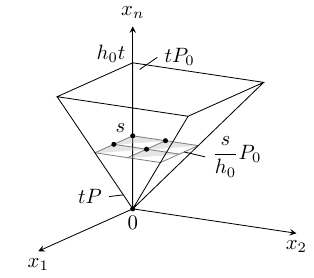}
\caption{The cross section of $tP$ cut by the hyperplane $x_n = s$.}\label{fig-8}
\end{figure}
Here, by the induction hypothesis for (3), $\ehr^* (h_0^{-1}P_0,Q;t)$ is an eventual quasi-polynomial over $\mathbb Q$ (or an eventual quasi-polynomial with period $h_0$ over $\mathbb Z[1/(d-1)!]$). 
This implies that 
\[\ehr^* (P,Q;t) = \sum_{r = 0}^{h_0-1}\sum_{s = 1}^{t}\sum_{i = 0}^{d-1}c_{r,i}^*s^i = \sum_{i = 0}^{d-1}\left(\sum_{r = 0}^{h_0-1}c_{r,i}^*\right)\left(\sum_{s = 1}^{t}s^i\right)\] 
for some $c_{r,i}^* \in \mathbb Q$ (or $c_{r,i}^* \in \mathbb Z[1/(d-1)!]$). 
Therefore, $\ehr^* (P,Q;t)$ is an eventual quasi-polynomial over $\mathbb Q$ (or an eventual polynomial over $\mathbb Z[1/d!]$), since $\sum_{s = 1}^{t}s^i$ is a polynomial in $t$ over $\mathbb Z[1/(i+1)!]$ for each $i \in \{ 0,1,\dots,d-1\}$ by Faulhaber's formula. 
Furthermore, the leading term of $\ehr^* (P,Q;t)$ is 
\[\frac{1}{d}\left(\frac{1}{h_0}\right) ^{d-1}\vol_{d-1}(P_0)(h_0t)^d = \frac{1}{d}\vol_{d-1}(P_0)h_0t^d = \vol_d(P)t^d\] 
by Faulhaber's formula again, since the leading term of $\ehr^* (h_0^{-1}P_0,Q;s)$ is $(h_0^{-1})^{d-1}\vol_{d-1}(P_0)s^{d-1}$. 
To prove (3), let $h \in \mathbb Z_{> 0}$. 
Then, $\ehr^* (h^{-1}h_0^{-1}P_0,Q;t)$ is an eventual quasi-polynomial by the induction hypothesis, which implies that 
\[\ehr^* (h^{-1}P,Q;t) = \sum_{s = 1}^{h_0t}\ehr^* (h^{-1}h_0^{-1}P_0,Q;s)+\begin{cases} 
0 & \text{if }\dim Q > 0, \\ 
1 & \text{if }\dim Q = 0
\end{cases}\] 
is an eventual quasi-polynomial. 
Furthermore, if $P$ and $Q$ are integral, then $P_0$ is also integral, and therefore $\ehr^* (h^{-1}h_0^{-1}P_0,Q;t)$ is an eventual quasi-polynomial with period $h_0h$ by the induction hypothesis, which implies that $\ehr^* (h^{-1}P,Q;t)$ is an eventual quasi-polynomial with period $h$. 
\item
Suppose that for some integers $h_0 > 0$ and $h_1 > 1$, $P$ is a $d$-dimensional pyramid with apex at the origin and base given by a rational polytope $P_0$ lying on the hyperplane $x_n = h_0/h_1$, and $Q$ lies on $P_0$. 
Then, by the above argument, $\ehr^* (h_1P,Q;t)$ is an eventual quasi-polynomial, and so is $\ehr^* (h_1^{-1}h_1P,Q;t) = \ehr^* (P,Q;t)$.
\item
The general case. 
Decompose $P$ into $d$-dimensional pyramids $P_1$, $\dots$, $P_r$ whose bases lie on a hyperplane $H$ parallel to $Q$, such that any two of them intersect in at most a common facet.
For each subset $I$ of $\{ 1,\dots,r\}$, let $P_I = \bigcap_{i \in I}P_i$.
Then, by the principle of inclusion-exclusion, we obtain 
\begin{align*} 
\ehr^* (P,Q;t) &= \sum_{i = 1}^{r}\ehr^* (P_i,Q;t)+f^*(t), \\ 
f^*(t) &= \sum_{\# I = 2}^r(-1)^{\# I-1}\ehr^* (P_I,Q;t).
\end{align*} 
Depending on $i$, there exists a unimodular transformation that maps $H$ onto a hyperplane parallel to a coordinate hyperplane and maps all vertices of $P_i$ to rational points (or lattice points) (see \cite[Sections 1 and 2]{Bar03}). 
By Proposition \ref{prop-equiv-ehr} and the above argument, $\ehr^* (P_i,Q;t)$ is an eventual quasi-polynomial over $\mathbb Q$ (or an eventual polynomial over $\mathbb Z[1/d!]$) with leading term $\vol_d(P_i)t^d$. 
Furthermore, the correction term $f^*(t)$ is an eventual quasi-polynomial over $\mathbb Q$ (or an eventual polynomial over $\mathbb Z[1/(d-1)!]$) of degree at most $d-1$.
Indeed, if $P_I$ has no facet parallel to $H$, then $\ehr^* (P_I,Q;t) = 0$; if $P_I$ has a facet parallel to $H$, then $\ehr^* (P_I,Q;t)$ is an eventual quasi-polynomial over $\mathbb Q$ (or an eventual polynomial over $\mathbb Z[1/(d-1)!]$) of degree at most $d-1$ by the induction hypothesis, since $\dim P_I \leq d-1$.
Therefore, $\ehr^* (P,Q;t)$ is an eventual quasi-polynomial over $\mathbb Q$ (or an eventual polynomial over $\mathbb Z[1/d!]$) with leading term 
\[\sum_{i = 1}^{r}\vol_d(P_i)t^d = \left(\sum_{i = 1}^{r}\vol_d(P_i)\right) t^d = \vol_d(P)t^d,\] 
and satisfies (3).
\end{itemize}
\item[(ii)]
The case where $\dim P = \dim Q = d-1$. 
By (i) and the induction hypothesis for (4), the left-hand sides of \eqref{eq-prism1-ehr} and \eqref{eq-prism1-ehrc} are eventual quasi-polynomials over $\mathbb Q$ (or eventual polynomials over $\mathbb Z[1/(d-1)!]$) with leading term $\vol_d (P\times [0,1])t^d = \vol _{d-1}(P)t^d$, and satisfy (3). 
Therefore, $\ehr (P,Q;t)$ and $\ehr^\circ (P,Q;t)$ are eventual quasi-polynomials over $\mathbb Q$ (or eventual polynomials over $\mathbb Z[1/(d-1)!]$) with leading term $\vol_{d-1}(P)t^{d-1}$, and satisfy (3).
\end{enumerate}
Finally, (4) follows from the induction hypothesis for (2) via the equations \eqref{eq-prism1-ehr} and \eqref{eq-prism1-ehrc}.\par
By the above, the desired assertions hold for all $d \in \mathbb Z_{> 0}$.
\end{proof}
\section{Inequalities}\label{sec-ineq}
\subsection{Monotonicity and Antimonotonicity}
To prove Theorem \ref{thm-ineq-ehr}, we use the following proposition.
\begin{prop}\label{prop-mono-ehr}
Let $P$, $P'$, $Q$, $Q'$ be convex rational polytopes in $\mathbb R^n$.
\begin{enumerate}
\item[\textup{(1)}]
Monotonicity for the container: if $P \supset P'$, then 
\begin{align} 
\ehr (P,Q;t) &\geq \ehr (P',Q;t), \label{eq-mono-ehr} \\ 
\ehr^\circ (P,Q;t) &\geq \ehr^\circ (P',Q;t). \label{eq-mono-ehrc} 
\end{align} 
\item[\textup{(2)}]
Antimonotonicity for the core: if $Q \supset Q'$, then 
\begin{align} 
\ehr (P,Q;t) &\leq \ehr (P,Q';t), \label{eq-anti-ehr} \\ 
\ehr^\circ (P,Q;t) &\leq \ehr^\circ (P,Q';t). \label{eq-anti-ehrc} 
\end{align} 
\end{enumerate}
\end{prop}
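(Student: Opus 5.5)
The plan is to argue directly from the Minkowski-difference representations \eqref{eq-mind-ehr} and \eqref{eq-mind-ehrc}. These say that $\ehr(P,Q;t)$ counts the lattice points of $tP-Q=\{\bm a\in\mathbb Z^n\mid tP\supset Q+\bm a\}$, and $\ehr^\circ(P,Q;t)$ counts those of $tP^\circ-Q$. Both monotonicity statements then reduce to containments between these sets of admissible translation vectors, since the cardinality of $S\cap\mathbb Z^n$ is monotone in $S$.

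For (1), assume $P\supset P'$. Then $tP\supset tP'$ for every $t\in\mathbb Z_{>0}$. If $\bm a\in\mathbb Z^n$ satisfies $tP'\supset Q+\bm a$, it also satisfies $tP\supset Q+\bm a$. Hence $tP'-Q\subset tP-Q$, and counting lattice points gives \eqref{eq-mono-ehr}. For \eqref{eq-mono-ehrc} I need $tP^\circ\supset tP'^\circ$. This is the one point needing care, because relative interiors are not monotone under inclusion when the dimensions differ: a lower-dimensional $P'$ can lie in the relative boundary of $P$. I would handle it as follows. Since the paper works with polytopes of the same dimension as the container (recall $\dim P=d$ is fixed, and in this proposition the containers play the role of $P$), I will assume $\dim P'=\dim P$. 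Then the affine hulls coincide, so $P^\circ$ and $P'^\circ$ are interiors inside the same affine space, and interior is monotone there. Thus $tP'^\circ\subset tP^\circ$, and the same containment argument gives $tP'^\circ-Q\subset tP^\circ-Q$. If the proposition is meant without equal dimensions, I would state this hypothesis explicitly; this is the main obstacle I expect.

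For (2), assume $Q\supset Q'$ and let $P^*$ denote $P$ or $P^\circ$. If $tP^*\supset Q+\bm a$, then $tP^*\supset Q'+\bm a$, because $Q'+\bm a\subset Q+\bm a$. Hence $tP^*-Q\subset tP^*-Q'$, and \eqref{eq-anti-ehr} and \eqref{eq-anti-ehrc} follow by counting lattice points. No dimension issue arises here, because the container is fixed. The whole proof is therefore elementary; the only subtle step is the relative-interior monotonicity in \eqref{eq-mono-ehrc}.
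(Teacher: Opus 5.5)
Your argument is the paper's own: both parts come from the containments $\{\bm a\in\mathbb Z^n\mid tP'\supset Q+\bm a\}\subset\{\bm a\in\mathbb Z^n\mid tP\supset Q+\bm a\}$ and $\{\bm a\in\mathbb Z^n\mid tP\supset Q+\bm a\}\subset\{\bm a\in\mathbb Z^n\mid tP\supset Q'+\bm a\}$ and their interior analogues, exactly as in the paper's one-line proof.

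Your caveat for \eqref{eq-mono-ehrc} is justified, and it is precisely what the paper's ``proved similarly'' passes over: $t(P')^\circ\subset tP^\circ$ needs $\dim P'=\dim P$, and without that the inequality can genuinely fail. For example, $P=\mathrm{conv}\{(0,0),(3,0),(0,1)\}$, $P'=[0,3]\times\{0\}$ and $Q=\{\bm 0\}$ give $\ehr^\circ(P,Q;2)=2<5=\ehr^\circ(P',Q;2)$, and $7<8$ at $t=3$. So one needs either your hypothesis or a restriction to $t\gg 0$. In the latter case, $\dim P'<\dim P$ is handled not by containment but by comparing degrees via Theorem~\ref{thm-poly-ehr}.
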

\begin{proof}
Under the assumption $P \supset P'$, Inequality~\eqref{eq-mono-ehr} follows from the implication 
\[ tP' \supset Q+\bm a \Longrightarrow tP \supset Q+\bm a\] 
for each $\bm a \in \mathbb Z^n$. 
Under the assumption $Q \supset Q'$, Inequality~\eqref{eq-anti-ehr} follows from the implication 
\[ tP \supset Q+\bm a \Longrightarrow tP \supset Q'+\bm a\] 
for each $\bm a \in \mathbb Z^n$. 
Inequalities \eqref{eq-mono-ehrc} and \eqref{eq-anti-ehrc} are proved similarly.
\end{proof}
\subsection{Upper and Lower Bounds}
In this subsection, we prove Theorem~\ref{thm-ineq-ehr}.
\begin{proof}[Proof of Theorem~\ref{thm-ineq-ehr}]
Under the assumption of (1), for all $t \gg 0$, \eqref{eq-low-ehr} is proved as 
\begin{align*} 
\ehr (P,Q;mt) &= \ehr (mP,Q;t) \quad (\because\eqref{eq-kpq-ehr}) \\ 
&\geq \ehr (kQ,Q;t) \quad (\because kQ \subset mP,\ \text{monotonicity for the container}) \\ 
&= \ehr (Q,Q;kt) \quad (\because\eqref{eq-kpq-ehr}) \\ 
&= \ehr (Q;kt-1) \quad (\because\eqref{eq-pp-ehr}).
\end{align*} 
Furthermore, under the assumption of (2), for all $t \gg 0$, \eqref{eq-up-ehr} is proved as 
\begin{align*} 
\ehr (P,Q;mt) &= \ehr (mP,Q;t) \quad (\because\eqref{eq-kpq-ehr}) \\ 
&\leq \ehr (lQ,Q;t) \quad (\because mP \subset lQ,\ \text{monotonicity for the container}) \\ 
&= \ehr (Q,Q;lt) \quad (\because\eqref{eq-kpq-ehr}) \\ 
&= \ehr (Q;lt-1) \quad (\because\eqref{eq-pp-ehr}).
\end{align*} 
The inequalities~\eqref{eq-low-ehrc} and \eqref{eq-up-ehrc} are proved similarly.
\end{proof}
\section{Reciprocity Law and Shifted Duality}
\subsection{Reciprocity Law}\label{subsec-recip}
In this subsection, we prove Theorem~\ref{thm-ri-ehr}. 
\begin{proof}[Proof of Theorem~\ref{thm-ri-ehr}]
Proposition~\ref{prop-pqi-ehr} and Theorem~\ref{thm-recip-em} imply 
\begin{align*} 
\ehr (P,Q;-t) &= \ehr (P;-t-1) \quad (\because\eqref{eq-pqi-ehr}) \\ 
&= (-1)^{d}\ehr^\circ (P;t+1) \quad (\because\text{Theorem}~\ref{thm-recip-em}) \\ 
&= (-1)^{d}\ehr^\circ (P,Q;t+2) \quad (\because\eqref{eq-pqi-ehrc}). \qedhere 
\end{align*} 
\end{proof}
\subsection{Shifted Duality}\label{subsec-sd}
In this subsection, we prove Theorem~\ref{thm-sd-suff}. 
\begin{proof}[Proof of Theorem~\ref{thm-sd-suff}]
Suppose that $\dim Q > 0$, $\codeg P = \sigma$, $\ehr^\circ (P;\sigma ) = 1$, and $Q$ is inscribed in $P$. 
Fix an integer $t \gg 0$. 
By Proposition~\ref{prop-pqi-ehr}, to prove $\ehr(P,Q;t) = \ehr^\circ (P,Q;t+\sigma )$, it suffices to show 
\begin{equation} 
\ehr (P,P;t) = \ehr^\circ (P,P;t+\sigma ). \label{eq-sd-pf1} 
\end{equation} 
Since 
\[ tP-P = (t-1)P \quad\text{and}\quad (t+\sigma )P^\circ -P = (t-1+\sigma )P^\circ,\] 
the problem reduces to showing the equality of cardinalities 
\begin{equation} 
\# (t-1)P\cap\mathbb Z^n = \# (t-1+\sigma )P^\circ\cap\mathbb Z^n. \label{eq-sd-pf2} 
\end{equation} 
By the assumption $\ehr^\circ (P;\sigma ) = 1$, there exists a vector $\bm c \in \mathbb Z^n$ such that $\sigma P^\circ\cap\mathbb Z^n = \{\bm c\}$ and therefore 
\[ (t-1+\sigma )P^\circ\cap\mathbb Z^n = ((t-1)P+\sigma P^\circ )\cap\mathbb Z^n = ((t-1)P+\bm c)\cap\mathbb Z^n.\] 
Since the lattice translation $\bm x \mapsto \bm x+\bm c$ preserves cardinality, we obtain \eqref{eq-sd-pf2} and therefore \eqref{eq-sd-pf1} for $t \gg 0$. 
This completes the proof.
\end{proof}
\section{Open Problems}
Regarding the reciprocity law and the shifted duality, the following open problems remain.
\begin{q}
For which polytopes $P$ and $Q$ does the reciprocity law \eqref{eq-rl-ec} hold for some integer $\rho > 0$?
\end{q}
\begin{q}
What are the possible values of the shift number $\mathrm{rl}(P,Q) = \rho$ of the reciprocity law \eqref{eq-rl-ec}?
\end{q}
\begin{q}
For which polytopes $P$ and $Q$ does the shifted duality \eqref{eq-sd-ec} hold for some integer $\sigma > 0$?
\end{q}
\begin{q}
What are the possible values of the shift number $\mathrm{sd}(P,Q) = \sigma$ of the shifted duality \eqref{eq-sd-ec} for each value of $\dim P = d$?
\end{q}

\end{document}